\font\black=cmbx10 \font\sblack=cmbx7 \font\ssblack=cmbx5 \font\blackital=cmmib10  \skewchar\blackital='177
\font\sblackital=cmmib7 \skewchar\sblackital='177 \font\ssblackital=cmmib5 \skewchar\ssblackital='177
\font\sanss=cmss11 \font\ssanss=cmss8 
\font\sssanss=cmss8 scaled 600 \font\blackboard=msbm10 \font\sblackboard=msbm7 \font\ssblackboard=msbm5
\font\caligr=eusm10 \font\scaligr=eusm7 \font\sscaligr=eusm5  \font\fraktur=eufm10
\font\sfraktur=eufm7 \font\ssfraktur=eufm5
\font\bsymb=cmsy10 scaled\magstep2
\def\all#1{\setbox0=\hbox{\lower1.5pt\hbox{\bsymb
       \char"38}}\setbox1=\hbox{$_{#1}$} \box0\lower2pt\box1\;}
\def\exi#1{\setbox0=\hbox{\lower1.5pt\hbox{\bsymb \char"39}}
       \setbox1=\hbox{$_{#1}$} \box0\lower2pt\box1\;}
\def\tx#1{{\fam0\relax#1}}
\def\sss#1{{\fam\ssfam\relax#1}}
\def\hpb#1{\setbox0=\hbox{${#1}$}
    \copy0 \kern-\wd0 \kern.2pt \box0}
\def\vpb#1{\setbox0=\hbox{${#1}$}
    \copy0 \kern-\wd0 \raise.08pt \box0}
\def\pmb#1{\setbox0\hbox{${#1}$} \copy0 \kern-\wd0 \kern.2pt \box0}
\def\pmbb#1{\setbox0\hbox{${#1}$} \copy0 \kern-\wd0
      \kern.2pt \copy0 \kern-\wd0 \kern.2pt \box0}
\def\pmbbb#1{\setbox0\hbox{${#1}$} \copy0 \kern-\wd0
      \kern.2pt \copy0 \kern-\wd0 \kern.2pt
    \copy0 \kern-\wd0 \kern.2pt \box0}
\def\pmxb#1{\setbox0\hbox{${#1}$} \copy0 \kern-\wd0
      \kern.2pt \copy0 \kern-\wd0 \kern.2pt
      \copy0 \kern-\wd0 \kern.2pt \copy0 \kern-\wd0 \kern.2pt \box0}
\def\pmxbb#1{\setbox0\hbox{${#1}$} \copy0 \kern-\wd0 \kern.2pt
      \copy0 \kern-\wd0 \kern.2pt
      \copy0 \kern-\wd0 \kern.2pt \copy0 \kern-\wd0 \kern.2pt
      \copy0 \kern-\wd0 \kern.2pt \box0}
\mathchardef\za="710B  
\mathchardef\zb="710C  
\mathchardef\zg="710D  
\mathchardef\zd="710E  
\mathchardef\zve="710F 
\mathchardef\zz="7110  
\mathchardef\zh="7111  
\mathchardef\zvy="7112 
\mathchardef\zi="7113  
\mathchardef\zk="7114  
\mathchardef\zl="7115  
\mathchardef\zm="7116  
\mathchardef\zn="7117  
\mathchardef\zx="7118  
\mathchardef\zp="7119  
\mathchardef\zr="711A  
\mathchardef\zs="711B  
\mathchardef\zt="711C  
\mathchardef\zu="711D  
\mathchardef\zvf="711E 
\mathchardef\zq="711F  
\mathchardef\zc="7120  
\mathchardef\zw="7121  
\mathchardef\ze="7122  
\mathchardef\zy="7123  
\mathchardef\zf="7124  
\mathchardef\zvr="7125 
\mathchardef\zvs="7126 
\mathchardef\zf="7127  
\mathchardef\zG="7000  
\mathchardef\zD="7001  
\mathchardef\zY="7002  
\mathchardef\zL="7003  
\mathchardef\zX="7004  
\mathchardef\zP="7005  
\mathchardef\zS="7006  
\mathchardef\zU="7007  
\mathchardef\zF="7008  
\mathchardef\zW="700A  
\newcommand{\be}{\begin{equation}}
\newcommand{\ee}{\end{equation}}
\newcommand{\bea}{\begin{eqnarray}}
\newcommand{\eea}{\end{eqnarray}}
\newcommand{\beas}{\begin{eqnarray*}}
\newcommand{\eeas}{\end{eqnarray*}}
\def\*{{\textstyle *}}
\newcommand{\R}{{\mathbb R}}
\newcommand{\C}{{\mathbb C}}
\newcommand{\we}{\wedge}
\newcommand{\nn}{\nonumber}
\newcommand{\ot}{\otimes}
\newcommand{\pa}{\partial}
\newcommand{\ti}{\times}
\newcommand{\cG}{{\mathcal G}}
\newcommand{\cF}{{\mathcal F}}
\newcommand{\Li}{{\cal L}}
\def\cH{{\cal H}}
\def\Sec{\operatorname{Sec}}
\def\sT{{\sss T}}
\def\xd{\tx{d}}
\def\xi{\tx{i}}
\def\dt{\xd_{\sss T}}
\newcommand{\tr}{\mbox{$\mathrm{tr}$}}
\newcommand{\bk}[2]{\ensuremath{\langle #1 | #2\rangle}}
\newcommand{\re}{{\mathcal{R}e}}
\newcommand{\id}{\mathbbmss{1}}
\newcommand{\ckf}{\mathcal{K}^F}
\newcommand{\cnf}{\mathcal{N}^F}
\newtheorem{theorem}{Theorem}[section]
\newtheorem{proposition}[theorem]{Proposition}
\newtheorem{corollary}[theorem]{Corollary}
\theoremstyle{definition}
\newtheorem{example}[theorem]{Example}
\newtheorem{definition}[theorem]{Definition}
\newtheorem{remark}[theorem]{Remark}
\begin{document}

\title{Information geometry on groupoids: \\ the case of singular metrics}
\author{Katarzyna Grabowska\footnote{email:konieczn@fuw.edu.pl }\\
\textit{Faculty of Physics,
                University of Warsaw}
\\
\\
Janusz Grabowski\footnote{email: jagrab@impan.pl} \\
\textit{Institute of Mathematics, Polish Academy of Sciences} \\
\\
Marek Ku\'s\footnote{email: marek.kus@cft.edu.pl}\\
\textit{Center for Theoretical Physics, Polish Academy of Sciences}
 \\
\\
Giuseppe Marmo\footnote{email: marmo@na.infn.it}\\
\textit{Dipartimento di Fisica ``Ettore Pancini'', Universit\`{a} ``Federico II'' di Napoli} \\
\textit{and Istituto Nazionale di Fisica Nucleare, Sezione di Napoli} \\ \\
}
\date{}
\maketitle
\begin{abstract}
We use the general setting for contrast (potential) functions in statistical and information geometry provided by Lie groupoids and Lie algebroids. The contrast functions are defined on Lie groupoids and give rise to two-forms and three-forms on the corresponding Lie algebroid. We study the case when the two-form is degenerate and show how in sufficiently regular cases one reduces it to a pseudometric structures. Transversal Levi-Civita connections for Riemannian foliations are generalized to the Lie groupoid/Lie algebroid case.
\end{abstract}

\section{Introduction}
Any definition of information relies on statistics-probabilistic
considerations. Therefore, a suitable carrier space for information theory
is the space of probability distributions on some sample space.Minimal
geometric aspects on such a carrier space would be given by means of a
metric tensor to consider distances (to estimate differences)between
probability distributions and by means of a parallel transport  to
implement the property that convex combinations of probability
distributions is again a probability distribution.
\par
The pioneering work of  R.~A.~Fisher \cite{fisher1956} and C.~R.~Rao \cite{Rao1945} identified soon a metric
tensor which is nowadays called Fisher-Rao metric. Some time later, S.~Amari
and N.~N.~Chentsov introduced the notion of dualistic connections. In \cite{facchi10}, in turn,
it was observed that Fisher-Rao metric
is but one term of the Fubini-Study metric on the complex projective space
of a Hilbert space associated with a quantum system (within the
mathematical description advocated by Dirac).
The emergence of a Hilbert space came about as the simplest way to
implement the “superposition of states” for the description of
interference phenomena.
\par
At the same time, Born’s probabilistic
interpretation of wave functions, vectors  of a Hilbert space in a chosen
coordinate system, as probability amplitudes on some “configuration space”
(sample space), required that only normalized complex wave functions, up
to an overall phase, may be considered as describing a (pure) quantum
state. This observation points at the fact that probability amplitudes may
be “composed” to describe interference while probabilities cannot be
“composed” for the description of interference. At the abstract level, the
“square root” of a probability distribution to define an amplitude,
amounts to go from rank-one projectors (pure states) to rank-one
operators.
This is exactly the framework for the mathematical description of
Schwinger approach to quantum mechanics  in terms of groupoids \cite{ciaglia18a, ciaglia19a, ciaglia19b,ciaglia19c,ciaglia19d,ciaglia20, ibort13}.
\par
In concrete examples, both in statistics and quantum mechanics, one
finds convenient to deal not with  all the space of probability
distributions or all wave functions, but only with subsets.For instance,
one may be interested in considering only Gaussian distributions or only
coherent states; on these subspaces the “linear structure” is lost,
therefore “manifold” notions are unavoidable  to squarely consider those
aspects of the theory which do not depend on the chosen parametrization of
the selected submanifold.
\par
A relevant notion which appears in both setting is the notion of \emph{relative
entropy}, it is a two-points function which is able to introduce a concept
of \emph{distinguishability} between a pair of probability distributions. In the
classical case it is usually associated with the name of Shannon, while in
the quantum setting it is associated with the name of von Neumann. As a
matter of fact, by now, many notions of relative entropies are available.
Having these notions as two-points functions ,and knowing that Hermitian
structures on Hilbert spaces  may be associated with K\"ahlerian potentials,
it has been shown that relative entropies may be used as potential
functions to construct both the metric tensor and the dualistic
connections.The construction in terms of a function and the exterior
differential calculus makes the procedure appropriate to be pulled-back to
submanifolds.
Within the statistical-probabilistic setting, two-points function which
generalize the notion of  relative entropies are named \emph{contrast functions}
or \emph{divergences}.
After this long introduction, dealing with two-points functions strongly
suggests to use the framework of \emph{Lie groupoids} and \emph{Lie algebroids}, with the pair groupoids and tangent bundles as canonical examples, to carry on the definition
of tensor fields out of potential functions.This was indeed done  in our
previous paper \cite{grabowska19}.
\par
In this paper we would like to take into account the possibility that our
distinguishability  functions could be defined on a space  (in the quantum
case, the Hilbert space) which provides a redundant description of the
objects of interest (states) and therefore should take into account that
there are objects which should not be distinguished because they
correspond to the same state. As a consequence, the symmetric tensor we
would like to extract from the potential function would give zero-distance
between objects that should not be distinguished. In another words, the `metric' induced by the contrast functions is degenerate. Because of this
circumstance, the derivation of the dualistic connections from the
potential function would not be straightforward and additional ingredients
should be introduced. The present paper presents a preliminary solution of
the stated problem.

\section{Information geometry}

The idea of \emph{information geometry} is to employ tools of differential geometry to analyse the structure of the set of probability distributions relevant to certain statistical or probabilistic problem \cite{Amari2007,Amari2016,Ciaglia2017}. The set of probability distributions is given a structure of a \emph{statistical manifold} i.e. a differential manifold $\mathcal{M}$ together with a metric tensor $g$ and a \emph{skewness tensor} $T$. Points of $\mathcal{M}$ parameterize a family of probability distributions while $T$ is a third order tensor characterizing the \emph{flatness} of the statistical manifold.

Since $(\mathcal{M}, g)$ is a metric manifold we have at our disposal the Levi-Civita connection which together with the skewness tensor gives rise to the family of torsionless connections $\nabla^\alpha$. The Christoffel symbols of $\nabla^\alpha$ are given by the following formula
\begin{equation}\label{eq:nablaalpha}
\Gamma^\alpha_{jkl}:=\Gamma^{\mathrm{LC}}_{jkl}-\frac{\alpha}{2}T_{jkl},
\end{equation}
where $\Gamma^\mathrm{LC}_{jkl}$ are the Christoffel symbols of the Levi-Civita connection. Connections $\nabla^\alpha$ and $\nabla^{-\alpha}$ satisfy
\begin{equation}\label{eq:dual}
Z\left(g(X,Y)\right)= g\left(\nabla^\alpha_ZX,Y\right) +g\left(X, \nabla^{-\alpha}_Z Y \right)
\end{equation}
for all vector fields $X,Y,Z$ on $\mathcal{M}$. Equation (\ref{eq:dual}) is called the \emph{duality property}. If $\nabla^\alpha=\nabla^{-\alpha}$ we say that the torsionless connection $\nabla^\alpha$ is self-dual. From (\ref{eq:nablaalpha}) we see that if the connection $\nabla^\alpha$ is self-dual then $T=0$, therefore the only self-dual torsionless connection is the Levi-Civita connection itself.

Instead of the triple $(\mathcal{M}, g, T)$ we can consider $(\mathcal{M}, g, \nabla, \nabla^\ast)$ where $\nabla=\nabla^1$ and $\nabla^\ast=\nabla^{-1}$. Then of course the skewness tensor can be expressed as $T_{jkl}=\Gamma^\ast_{jkl}-\Gamma_{jkl}$. A statistical manifold $\mathcal{M}$ is called \emph{dually flat} if both connections $\nabla$ and $\nabla^\ast$ are flat \cite{Amari2007,Amari2016,Ay2002}. There exist important examples of statistical manifolds that are not dually flat, e.g. the space of pure states of a finite-level quantum system does not admit a dually flat structure \cite{Ay2002, Ay2003}.
	
The structure of a statistical manifold often comes from a \emph{contrast function}, also called \emph{divergence}, introduced in \cite{Amari2007, Amari2016}. It is a function $F:\mathcal{M}\times \mathcal{M}\rightarrow \mathbb{R}$ which can be understood as a ``directed'' distance that measures a relative distinguishability of two probability distributions \cite{Ciaglia2017}. The potential function is non-negative and the value $F(m_1,m_2)$ vanishes if and only if $m_1=m_2$, i.e. exactly on the diagonal of $\mathcal{M}\times \mathcal{M}$. This implies that $F$ hasa minimum on the diagonal hence differential $\xd F(m,m)=0$. Using the coordinates $\left\lbrace \zeta^j\right\rbrace $ on the first manifold $\mathcal{M}$ and $\left\lbrace \zx^j\right\rbrace $ on the second we have that \cite{Matumoto1993}
\begin{equation}\label{eq:Ds}
\left. \frac{\partial F}{\partial \zeta^j}\right|_{\zeta=\zx}=\left. \frac{\partial F}{\partial \zx^j}\right|_{\zeta=\zx}=0.
\end{equation}
Assuming that $F$ is at least $\mathcal{C}^3$, the other elements of the structure of a statistical manifold are given in terms of the contrast function $F$ by the formulae
\begin{equation}\label{eq:gcoord}
g_{jk}=\left. \frac{\partial^2F}{\partial \zeta^j\partial \zeta^k}\right|_{\zeta=\zx}=\left. \frac{\partial^2F}{\partial \zx^j\partial \zx^k}\right|_{\zeta=\zx}=-\left. \frac{\partial^2F}{\partial \zx^j\partial \zeta^k}\right|_{\zeta=\zx}
\end{equation}
for the metrics, and
\begin{equation}\label{key}
T_{jkl}=\left. \frac{\partial^3F}{\partial \zeta^l\partial \zx^k\partial \zx^lj}\right|_{\zeta=\zx}=-\left. \frac{\partial^3F}{\partial \zx^l\partial \zeta^k\partial \zeta^j}\right|_{\zeta=\zx}
\end{equation}
for the skewness tensor. In many examples some additional requirements are imposed on $F$ which provide the metric with additional properties.

\section{Lie groupoids and Lie algebroids}
In \cite{grabowska19} we have shown how to construct a Lie algebroid dualistic structure starting from a contrast function on a Lie groupoid. For a closer description we refer to \cite{grabowska19}, for the theory of Lie groupoids and algebroid we refer to \cite{Mackenzie2005, Meinrenken2017}. To fix the notation, just recall that the structure of a \emph{Lie groupoid}, $\mathcal{G}\rightrightarrows M$, consists of a manifold $\mathcal{G}$, a submanifold $\imath: M \hookrightarrow\mathcal{G}$, and two surjective submersions $\mathsf{s},\mathsf{t}:\mathcal{G}\rightarrow M $, called \emph{source} and \emph{target}, such that $ \mathsf{t}\circ\imath= \mathsf{s}\circ\imath=\mathrm{id}_M$. One can think of $ \mathcal{G} $ as a set of arrows that start and end at points of $M$ justifying the names of maps $\mathsf{s}$ and $\mathsf{t}$. Every element of $M$ is a trivial arrow.

The arrows $g_1$ and $g_2$ can be composed, $g_1\circ g_2$, provided  the target of $g_2$ is a source of $g_1$, i.e. $\mathsf{s}(g_1) = \mathsf{t}(g_2)$. The composition (groupoid multiplication) defined on a subset $\mathcal{G}^2\subset \cG$ is another element of the groupoid structure:
\begin{equation}\label{eq:composition}
\circ:\mathcal{G}^{(2)}:=\left\lbrace (g_1,g_2)\in\mathcal{G}^2, \mathsf{s}(g_1) = \mathsf{t}(g_2)\right\rbrace \ni (g_1,g_2)\mapsto g_1 \circ g_2\in\mathcal{G}.
\end{equation}
It is clear that $ \mathsf{t}(g_1 \circ g_2)=\mathsf{t}(g_1) $ and $ \mathsf{s}(g_1 \circ g_2)=\mathsf{s}(g_1) $. The composition $ \circ $ is associative, i.e., $ (g_1 \circ g_2)\circ g_3=g_1 \circ (g_2\circ g_3)$, whenever the triple composition is defined, i.e. when
$$ (g_1,g_2,g_3) \in \mathcal{G}^{(3)}:=\left\lbrace (g_1,g_2,g_3)\in\mathcal{G}^3:\; \mathsf{s}(g_1) = \mathsf{t}(g_2),\; \mathsf{s}(g_2) = \mathsf{t}(g_3)\right\rbrace\,.$$

Elements $x\in M $ act as units $\id_{x}$ with respect to the groupoid multiplication $ \mathsf{t}(g) \circ g=g=g\circ\mathsf{s}(g) $. The structure is completed by an inverse map
$$ \mathrm{inv}:\mathcal{G}\ni g\mapsto g^{-1}\in \mathcal{G} $$
that `switches' the direction of an arrow. More precisely,
$ \mathsf{s}(g^{-1}) = \mathsf{t}(g) $, $ \mathsf{t}(g^{-1}) = \mathsf{s}(g)$. This means that $g$ and $g^{-1}$ are composable  and $ g\circ g^{-1}=\id_{\mathsf{t}(g)}$ while $g^{-1}\circ g=\id_{\mathsf{s}(g)}$.

\emph{Lie algebroids} are infinitesimal parts of Lie groupoids. A Lie algebroid is a vector bundle $ \tau: E\rightarrow M$ equipped with a Lie bracket $\left[.,. \right] $ on the space $\Sec(E)$ of its sections and a vector bundle map $ {\za}:E\rightarrow\mathsf{T}M $ over the identity on $M$ satisfying the equation
\be\label{eq:leibniz}
\left[X, fY \right]=f\left[X,Y \right]+({\za}(X) f) Y,
\ee
for all $X,Y\in\Sec(E)$,  $ f\in \mathcal{C}^\infty(M)$. The map $\alpha$ is called the \emph{anchor} of the algebroid $E$ while the formula (\ref{eq:leibniz}) is refered to as Leibniz rule for obvious reasons.

There are two canonical examples of a Lie algebroid. Every Lie algebra considered as a vector bundle over one-point manifold is a Lie algebroid and the tangent bundle $ \mathsf{T}M $ with its usual bracket of vector fields is a Lie algebroid with $ {\za}=\mathrm{id }_{\mathsf{T}M}  $.	

The Lie algebroid $ E=\mathrm{Lie}(\mathcal{G}) $ of a Lie groupoid $ \mathcal{G}\rightrightarrows M $ is defined as follows. The vector bundle $E\rightarrow M$ is the normal bundle of $M$ in $\mathcal{G}$, i.e. $\mathrm{Lie}(\mathcal{G})=\mathsf{T}\mathcal{G}|_M/\mathsf{T}M$.
To define the anchor map let us consider the difference $(\mathsf{T}\mathsf{s}-\mathsf{T}\mathsf{t}):\mathsf{T}\mathcal{G}\rightarrow \mathsf{T}M$ restricted to $\sT\cG|_M$. Since $\mathsf{s}$ and  $\mathsf{t}$ are equal on $M$ the difference $\mathsf{T}\mathsf{s}-\mathsf{T}\mathsf{t}$ vanishes on vectors tangent to $M$ so it is well defined on the quotient bundle $\mathrm{Lie}(\mathcal{G})$ giving rise to the map ${\za}:\mathrm{Lie}(\mathcal{G})\rightarrow \mathsf{T}M$.

The bracket of sections of $\mathrm{Lie}(\mathcal{G})$ comes from left or right invariant vector fields on $\cG$ as it is in the case of Lie algebra of a Lie group.  A vector field $ \tilde{X} $ on $ \mathcal{G} $ is called \emph{left-invariant} if it is tangent to fibers of the target map and satisfies $ \mathsf{T}L_g \tilde{X}_h=\tilde{X}_{g\circ h}$. \emph{Right-invariant} vector fields are defined similarly as those vector fields $ \tilde{X} $ that are  tangent to source fibers and satisfy $ \mathsf{T}R_g \tilde{X}_h=\tilde{X}_{h\circ g}$. The spaces of left- and right- invariant vector fields denoted by $\mathfrak{X}^L(\mathcal{G})$ and $\mathfrak{X}^R(\mathcal{G})$ respectively form Lie subalgebras of vector fields on $\cG$. Note that both $\mathrm{Ker}\mathsf{T}\mathsf{t}|_M $ and  $ \mathrm{Ker}\mathsf{T}\mathsf{s}|_M $ are complements to $ \mathsf{T}M $ in $ \mathsf{T}\mathcal{G}|_M $. We can then identify each of these bundles with the normal bundle. Each element of $\mathrm{Lie}(\mathcal{G})$ corresponds to one vector tangent to the source fibre and one vector tangent to the target fibre at an appropriate point of $M$. For a section $X\in\Sec(\mathrm{Lie}(\mathcal{G}))$ we denote by $ X^L\in\mathfrak{X}_L(\mathcal{G}) $ and $ X^R\in\mathfrak{X}_R(\mathcal{G}) $ the unique left-invariant and right-invariant vector fields, such that $ X^L|_M\sim X $ and  $ X^R|_M\sim X $.
\begin{proposition}
	There exists a unique Lie bracket $ \left[.,. \right]  $ on the space of sections $ \Sec\left(\mathrm{ Lie}(\mathcal{G})\right)  $, such that
\be\left[X^L,Y^L \right]=\left[X,Y\right]^L\,\  \left[X^R,Y^L \right]=0\,\ \left[X^R,Y^R \right]=-\left[X,Y\right]^R\,.\ee
The bundle $\mathrm{ Lie}(\mathcal{G})\rightarrow M$ with the bracket $\left[.,. \right]$ and map $\alpha$ is a Lie algebroid. For $f\in\mathcal{C}^\infty(M)$ we define  $f^L=\mathsf{s}^\ast f$ and $f^R=\mathsf{t}^\ast f$, then
\be X^L(f^L)=({\za}(X)f)^L\,,\  X^R(f^L)=0\,,\ X^L(f^R)=0\,,\  X^R(f^R)=-({\za}(X)f)^R\,.\ee
\end{proposition}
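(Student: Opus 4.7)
The plan is to use the standard bijection $X\leftrightarrow X^L$ between $\Sec(\mathrm{Lie}(\cG))$ and the Lie subalgebra $\mathfrak{X}^L(\cG)\subset\mathfrak{X}(\cG)$ to transport the Lie bracket of vector fields to sections, and then verify the remaining algebroid axioms directly on $M$.

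The first step is to note that the flow of $X^L$ at time $\epsilon$ is given by $\phi^L_\epsilon(g)=g\circ\gamma_{\mathsf{s}(g)}(\epsilon)$, where $\gamma_x$ is the integral curve of $X^L$ through $x\in M$; the flow of $Y^R$ is given symmetrically by $\psi^R_\delta(g)=\eta_{\mathsf{t}(g)}(\delta)\circ g$. Associativity of groupoid multiplication then gives $\phi^L_\epsilon\circ\psi^R_\delta=\psi^R_\delta\circ\phi^L_\epsilon$, so $[X^L,Y^R]=0$; by antisymmetry this is the middle identity of the proposition. The same associativity argument restricted to $L_g$ shows that $\mathfrak{X}^L(\cG)$ is closed under the Lie bracket, which allows me to define $[X,Y]\in\Sec(\mathrm{Lie}(\cG))$ as the unique section with $[X,Y]^L=[X^L,Y^L]$; uniqueness of the groupoid-level bracket is then automatic from injectivity of $Z\mapsto Z^L$.

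To obtain the sign in $[X^R,Y^R]=-[X,Y]^R$, I would invoke the inversion $\mathrm{inv}\colon g\mapsto g^{-1}$. From $\mathrm{inv}\circ L_g=R_{g^{-1}}\circ\mathrm{inv}$, $\mathrm{inv}|_M=\mathrm{id}_M$, and the fact that $\mathrm{inv}$ exchanges $\ker\mathsf{T}\mathsf{s}|_M$ with $\ker\mathsf{T}\mathsf{t}|_M$ (coming from $\mathsf{s}\circ\mathrm{inv}=\mathsf{t}$), a short computation gives $\mathrm{inv}_\ast X^R=-X^L$, the minus sign reflecting that the map induced by $\mathsf{T}_x\mathrm{inv}$ on $\mathrm{Lie}(\cG)_x=\mathsf{T}_x\cG/\mathsf{T}_xM$ is $-\mathrm{id}$. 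Pushing $[X^L,Y^L]=[X,Y]^L$ forward by $\mathrm{inv}_\ast$ then yields the claimed formula. For the pullback identities, I would use $\mathsf{s}\circ L_g=\mathsf{s}$ together with the observation that $X^L|_M\in\ker\mathsf{T}\mathsf{t}$ satisfies $\alpha(X)=\mathsf{T}\mathsf{s}(X^L|_M)$ (since $\alpha=\mathsf{T}\mathsf{s}-\mathsf{T}\mathsf{t}$ on representatives): left-invariance propagates this to $\mathsf{T}_g\mathsf{s}(X^L_g)=\alpha(X)(\mathsf{s}(g))$, whence $X^L(f^L)=(\alpha(X)f)^L$. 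The analogous argument with $\mathsf{t}$ and $R_g$, starting from $\alpha(X)=-\mathsf{T}\mathsf{t}(X^R|_M)$, gives $X^R(f^R)=-(\alpha(X)f)^R$. The vanishings $X^L(f^R)=X^R(f^L)=0$ are immediate, since $X^L$ is tangent to $\mathsf{t}$-fibres on which $f^R=\mathsf{t}^\ast f$ is constant, and symmetrically for $X^R$.

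Leibniz on $\Sec(\mathrm{Lie}(\cG))$ then follows from $(fY)^L=f^L Y^L$ (immediate from left-invariance) combined with the pullback formula just established:
\begin{equation*}
[X^L,f^L Y^L]=X^L(f^L)Y^L+f^L[X^L,Y^L]=\bigl((\alpha(X)f)Y+f[X,Y]\bigr)^L,
\end{equation*}
so $[X,fY]=(\alpha(X)f)Y+f[X,Y]$. The Jacobi identity on $\Sec(\mathrm{Lie}(\cG))$ is inherited directly from Jacobi in $\mathfrak{X}(\cG)$. I expect the main technical obstacle to be the careful sign bookkeeping: both $[X^R,Y^R]=-[X,Y]^R$ and $X^R(f^R)=-(\alpha(X)f)^R$ hinge on the fact that $\mathrm{inv}_\ast$ acts as $-\mathrm{id}$ on the quotient $\mathsf{T}\cG|_M/\mathsf{T}M$, and one must be careful about the two complementary identifications of $\mathrm{Lie}(\cG)$ with $\ker\mathsf{T}\mathsf{t}|_M$ and $\ker\mathsf{T}\mathsf{s}|_M$.
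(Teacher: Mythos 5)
The paper does not actually prove this proposition: it is quoted as standard background on the Lie functor for groupoids (with Mackenzie and Meinrenken as references), so there is no in-paper argument to measure yours against. Your proof is correct and is the canonical one: you transport the bracket of vector fields through the bijection $X\mapsto X^L$ onto the Lie subalgebra of left-invariant, $\mathsf{t}$-fibre-tangent fields; you derive $[X^L,Y^R]=0$ from the commutation of the local flows $g\mapsto g\circ\gamma_{\mathsf{s}(g)}(\epsilon)$ and $g\mapsto\eta_{\mathsf{t}(g)}(\delta)\circ g$, which is literally associativity; you obtain the right-invariant identities by pushing forward with $\mathrm{inv}$; and you check the anchor and Leibniz identities from $\mathsf{s}\circ L_g=\mathsf{s}$, $\mathsf{t}\circ R_g=\mathsf{t}$ and $(fY)^L=f^LY^L$. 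Uniqueness is indeed immediate from injectivity of $Z\mapsto Z^L$. The one step you assert without carrying out is that $\mathsf{T}_x\mathrm{inv}$ induces $-\mathrm{id}$ on $\mathsf{T}_x\mathcal{G}/\mathsf{T}_xM$; since both minus signs in the statement hinge on it, you should include the short computation: differentiating $g\circ g^{-1}=\imath(\mathsf{t}(g))$ at $x\in M$ and using the standard formula $\mathsf{T}_{(x,x)}m(v,w)=v+w-\mathsf{T}\imath(\mathsf{T}\mathsf{s}(v))$ for the tangent of the multiplication at a unit gives $\mathsf{T}_x\mathrm{inv}(v)=-v+\mathsf{T}\imath\bigl(\mathsf{T}\mathsf{s}(v)+\mathsf{T}\mathsf{t}(v)\bigr)$, which is $-v$ modulo $\mathsf{T}_xM$. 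With that filled in, the argument is complete.
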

If $ \mathcal{G}=G $ is a Lie group, then $ \mathrm{Lie}(\mathcal{G}) $ is the Lie algebra of $G$.

\subsection{Lie algebroid connections}
A Lie algebroid connection is a generalization of the notion of an affine connection on a vector bundle. Let $E\rightarrow M$ be a Lie algebroid and $V\rightarrow M$ a vector bundle over the same base $M$. A bilinear map
$$ \nabla:\Sec(E)\times \Sec(V)\rightarrow \Sec(V)\,,\ (X,\sigma)\mapsto \nabla_X \sigma\,,$$
satisfying the properties: $\nabla_{fX} = f\nabla_X $, $ \nabla_X (f\sigma) = f\nabla_X \sigma +{\za}(X)(f)\sigma$ is called $E$-\emph{connection} on $V$.
We see that, indeed, the $\sT M$-connection is the standard affine connection on $V$. Moreover, every $\mathsf{T}M$-connection $\tilde{\nabla}$ determines an $E$-connection by setting $\nabla_X=\tilde{\nabla}_{{\za}(X)}$.

If the map $X\rightarrow \nabla_X$ preserves the bracket, i.e. $\nabla_{\left[X,Y \right]}=\left[\nabla_X,\nabla_Y\right]$ we say that the connection is \emph{flat} or that it is a \emph{representation} of $E$. More generally, the tensor filed $ \mathrm{Curv}^\nabla \in \Sec(\Lambda^2 E^\ast\ot\mathrm{End}(V))$
defined by
$$ \mathrm{Curv}^\nabla(X,Y) = \left[\nabla_X,\nabla_Y\right]-\nabla_{\left[X,Y \right]}=\nabla_X \nabla_Y - \nabla_Y \nabla_X- \nabla_{\left[X,Y \right]}\,$$
is called  \emph{curvature} of an $E$-connection. Flat connections have then vanishing curvature.

For $E$-connection in $E$ one can also define the tensor field $\mathrm{Tor}^\nabla\in\Sec(\Lambda^2E\ot E)$ called \emph{torsion}
$$\mathrm{Tor}^\nabla(X,Y)=\nabla_X Y-\nabla_Y X-\left[X,Y \right]\,.$$

A \emph{pseudo-Riemannian metric} on $E$ is a section of the symmetric tensor product $\bigodot^2E^\ast=E^\ast\odot E^\ast$, which is nondegenerate in a sense that $\Sec(E)\ni X \mapsto g(X,\cdot)\in\Sec(E^\ast)$ defines an isomorphism of vector bundles.

\begin{proposition} Let $g$ be a pseudo-Riemannian metric on an algebroid $E$. There exists a unique $E$-connection $\nabla^g$ on $E$ that is torsion free and  \emph{metric}, i.e. it satisfies $g\left(\nabla^g_X Y,Z \right)+g\left(Y,\nabla^g_X Z\right)={\za}(X)g(Y,Z)$. Such a connection we call the \emph{Levi-Civita connection} of $g$.
\end{proposition}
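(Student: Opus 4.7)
The plan is to mimic the classical proof of the fundamental theorem of Riemannian geometry, adapted to the algebroid setting: derive a Koszul-type formula that forces uniqueness, then use that same formula to define the connection and check by hand that it is well posed, torsion-free and metric. The only genuinely new ingredient compared to the $\sT M$-case is that derivatives of functions enter through the anchor $\alpha$, and the Lie bracket satisfies the Leibniz rule (\ref{eq:leibniz}) rather than the usual one, so every manipulation must be verified with these modifications.

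For uniqueness, I would assume that $\nabla^g$ exists and apply the metric identity to the cyclic triples $(X,Y,Z)$, $(Y,Z,X)$, $(Z,X,Y)$. Adding the first two and subtracting the third, then eliminating the remaining non-symmetric terms using the torsion-free condition $\nabla^g_X Y-\nabla^g_Y X=[X,Y]$, I expect to obtain the algebroid Koszul formula
\begin{equation}\label{eq:koszul-plan}
2\,g(\nabla^g_X Y,Z)=\alpha(X)g(Y,Z)+\alpha(Y)g(X,Z)-\alpha(Z)g(X,Y)+g([X,Y],Z)-g([X,Z],Y)-g([Y,Z],X).
\end{equation}
Since $g$ is nondegenerate, this determines $\nabla^g_X Y\in\Sec(E)$ uniquely, provided the right-hand side is $\mathcal{C}^\infty(M)$-linear in $Z$.

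For existence, I would take (\ref{eq:koszul-plan}) as the definition and run through four tensoriality/Leibniz checks. First, that the right-hand side is $\mathcal{C}^\infty(M)$-linear in $Z$: replacing $Z$ by $fZ$, the anchor terms produce $\alpha(X)(f)g(Y,Z)+\alpha(Y)(f)g(X,Z)$, while the Leibniz rule (\ref{eq:leibniz}) applied to $[X,fZ]$ and $[Y,fZ]$ produces exactly the opposite contributions, so they cancel. Second, $\mathcal{C}^\infty(M)$-linearity in $X$ follows by a completely analogous cancellation between the anchor derivatives of $g(\cdot,\cdot)$ and the Leibniz corrections of $[fX,Y]$, $[fX,Z]$. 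Third, for the Leibniz rule in $Y$, replacing $Y$ by $fY$ yields $f$ times the original right-hand side plus the surviving correction $2\alpha(X)(f)g(Y,Z)$, giving $\nabla^g_X(fY)=f\nabla^g_X Y+\alpha(X)(f)Y$ after dividing by $2$ and using nondegeneracy.

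Finally, the torsion-free and metric properties follow by symmetrizing (\ref{eq:koszul-plan}) correctly: computing $2g(\nabla^g_X Y-\nabla^g_Y X,Z)$ from the formula, the anchor terms cancel in pairs and the remaining bracket terms collapse to $2g([X,Y],Z)$, while computing $2g(\nabla^g_X Y,Z)+2g(Y,\nabla^g_X Z)$ cancels all bracket contributions and leaves $2\alpha(X)g(Y,Z)$. The main obstacle is essentially bookkeeping in the Koszul cancellations, making sure that the non-tensoriality of $[\cdot,\cdot]$ and the appearance of $\alpha$ are tracked consistently; once these are done, uniqueness and existence are immediate from (\ref{eq:koszul-plan}) and nondegeneracy of $g$.
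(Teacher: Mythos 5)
Your proposal is correct and follows essentially the same route as the paper, which simply cites the classical $E=\sT M$ argument and records the same algebroid Koszul formula $2g(\nabla^g_XY,Z)=\za(X)g(Y,Z)+\za(Y)g(Z,X)-\za(Z)g(X,Y)+g([X,Y],Z)-g([Y,Z],X)-g([X,Z],Y)$. You merely spell out the tensoriality and Leibniz checks that the paper leaves implicit.
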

\begin{proof} The proof is done as in the case of $E=\sT M$:
\beas 2g(\nabla^g_X Y,Z) &=&{\za}(X)g(Y,Z)+{\za}(Y)g(Z,X)-{\za}(Z)g(X,Y)\\
&+&g(\left[X,Y\right],Z )-g(\left[Y,Z\right],X )-g(\left[X,Z\right],Y)\,.
\eeas
\end{proof}

For any $ E$-connection $\nabla$ on a pseudo-Riemannian Lie algebroid $(E,g)$ we define a \emph{dual connection} $\nabla^\ast$ by
$$\za(X)g(Y,Z)=g(\nabla_X Y,Z)+g(Y,\nabla_X^\ast Z)\,.$$
It follows that the Levi-Civita connection is a self-dual, torsion-free connection.

\subsection{Contrast functions on Lie groupoids}
We start with an example.
\begin{example}
	For any manifold $M$ one can construct the \emph{pair groupoid} $ M\times M\rightrightarrows M $, with $ \mathsf{s}(m,m^\prime)=m^\prime $, $ \mathsf{t}(m,m^\prime)=m $ and
$$ (m, m^\prime) = (m_1,m_1^\prime)\circ (m_2,m_2^\prime) \Leftrightarrow m_1^\prime=m_2\,, m=m_1\,, m_2^\prime=m^\prime\,.$$
The units are given by the diagonal embedding $ D:M \hookrightarrow M\times M $.
The Lie algebroid of this Lie groupoid is $\sT M$.	
\end{example}
In the above sense, the standard two point contrast function $F=F(x,y)$ can be viewed as a one-point function of the pair groupoid $\mathcal{G}=M\times M$. The metric induced by $F$ in the standard case is the (pseudo-Riemannian) metric on the Lie algebroid $\mathrm{Lie}(\mathcal{G})$. In the standard case $\cG=M\ti M$ it reduces to a metric on $\mathsf{T}M$ or, as we use to say in such a case, on $M$. Also the pair of dual connections defined by the two-point contrast function $F(x,y)$ can be generated as the pair of dual $E$-connections with $E=\mathrm{Lie}({\mathcal{G})}$. In this sense, the
expected and observed $\za$-geometries of a statistical model introduced by Chentsov
and Amari (cf. \cite{Amari1985})
are particular instances of geometries derived from contrast functions on Lie groupoids. Consequently,
these statistical geometries may be studied within this unified framework.

Let now $\mathcal{G}\rightrightarrows M$ be a Lie groupoid and $F:\mathcal{G}\rightarrow\mathbb{R}$ be a smooth function vanishing on $M\hookrightarrow \mathcal{G}$. We say that $F$ is a \emph{contrast function} if $dF|_M=0$.
\begin{proposition}[\cite{grabowska19}]
	Every contrast function $F:\mathcal{G}\rightarrow\mathbb{R}$ defines on the Lie algebroid $E=\mathrm{ Lie}(\mathcal{G})$ a symmetric 2-form $g^F\in\bigodot^2E^\ast$ by the formula $g^F(X,Y):=\tilde{X}\tilde{Y} F|_M$, where $ \tilde{X} $ and $ \tilde{Y} $ are any vector fields on $\mathcal{G}$ representing $X, Y \in \mathrm{ Lie}(\mathcal{G})$ at points of $M$. In particular we can use left- and right-invariant vector fields,
$$ g^F(X,Y)=X^LY^LF|_M=X^LY^RF|_M=X^RY^RF|_M\,.$$
Of course, if $F\ge 0$, then $g^F$ is non-negatively defined.

Moreover, $F^*=F\circ\mathrm{inv}$ is also a contrast function and $g^F=g^{F^*}$.
\end{proposition}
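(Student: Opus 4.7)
The plan is to identify $g^F$ with the natural descent of the Hessian of $F$ from the ambient tangent bundle to the Lie algebroid $E=\mathsf{T}\mathcal{G}|_M/\mathsf{T}M$. Because $F$ vanishes on $M$ and $\xd F|_M=0$, every unit $m\in M$ is a critical point of $F$, so the symmetric bilinear form
$$\mathrm{Hess}_F|_m(v,w):=v(\tilde{w}F)|_m$$
is well defined on $\mathsf{T}_m\mathcal{G}$ independently of the choice of extension $\tilde{w}$: the standard critical-point argument shows that if $\tilde{w}_m=0$ then, writing $\tilde{w}=z^\mu\partial_\mu$ in local coordinates with $z^\mu(m)=0$, one gets $v(\tilde{w}F)|_m=v(z^\mu)\partial_\mu F|_m+z^\mu(m)v(\partial_\mu F)=0$, since both factors vanish. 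Symmetry follows from $\tilde{v}\tilde{w}F-\tilde{w}\tilde{v}F=[\tilde{v},\tilde{w}]F$ together with $\xd F_m([\tilde{v},\tilde{w}]_m)=0$.

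Next I would show that $\mathrm{Hess}_F|_m$ vanishes whenever one of its arguments lies in $\mathsf{T}_mM$, so that it descends to a symmetric form on $E_m=\mathsf{T}_m\mathcal{G}/\mathsf{T}_mM$. In adapted coordinates $(x^i,y^\alpha)$ with $M=\{y=0\}$, the identities $F(x,0)\equiv 0$ and $\xd F|_M=0$ force $\partial_i\partial_j F|_M=0$ and $\partial_i\partial_\alpha F|_M=0$ (differentiate $F(x,0)=0$ and $\partial_\alpha F(x,0)=0$ with respect to $x^i$), so only the normal-normal block $\partial_\alpha\partial_\beta F|_M$ contributes, as expected. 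This both confirms the descent and produces the section $g^F\in\bigodot^2 E^\ast$. The three evaluations $X^LY^LF|_M$, $X^LY^RF|_M$ and $X^RY^RF|_M$ then all equal $g^F(X,Y)$, because each of $X^L,X^R$ (respectively $Y^L,Y^R$) is an admissible vector-field extension of $X$ (respectively $Y$) by the very construction of $\mathrm{Lie}(\mathcal{G})$ recalled earlier.

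If in addition $F\ge 0$, then every $m\in M$ is a minimum of $F$, so $\mathrm{Hess}_F|_m$ is positive semidefinite on $\mathsf{T}_m\mathcal{G}$, and hence so is its descent $g^F$ on $E_m$. Finally, for $F^\ast:=F\circ\mathrm{inv}$, the fact that $\mathrm{inv}$ fixes $M$ pointwise yields $F^\ast|_M=0$, and by the chain rule $\xd F^\ast|_M=\xd F|_M\circ\mathsf{T}\mathrm{inv}|_M=0$, so $F^\ast$ is again a contrast function. The equality $g^{F^\ast}=g^F$ follows from the Hessian transformation law at a fixed critical point,
$$\mathrm{Hess}_{F^\ast}|_m(v,w)=\mathrm{Hess}_F|_m\bigl(\mathsf{T}\mathrm{inv}|_m(v),\mathsf{T}\mathrm{inv}|_m(w)\bigr),$$
combined with the standard Lie-groupoid fact that $\mathsf{T}\mathrm{inv}|_M$ acts as the identity on $\mathsf{T}M$ and as $-\mathrm{id}$ on the quotient bundle $E$; this is obtained from $\mathrm{inv}\circ\imath=\imath$ together with $\mathrm{inv}^2=\mathrm{id}_\mathcal{G}$ and the identifications $E\cong\ker\mathsf{T}\mathsf{s}|_M\cong\ker\mathsf{T}\mathsf{t}|_M$ (equivalently, the identity $\mathsf{T}\mathrm{inv}\cdot X^L|_M=-X^R|_M$). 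Plugging this in gives $g^{F^\ast}(X,Y)=g^F(-X,-Y)=g^F(X,Y)$. I expect this last ingredient---the identification of the induced action of $\mathsf{T}\mathrm{inv}$ on the quotient $E$ with $-\mathrm{id}$---to be the main obstacle, since the rest of the argument reduces to routine critical-point calculus.
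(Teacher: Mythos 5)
The paper states this proposition without proof, simply citing \cite{grabowska19}, so there is no internal argument to compare yours against; judged on its own terms, your proof is correct and follows the natural route: at each unit $m\in M$ the conditions $F|_M=0$ and $\xd F|_M=0$ make the Hessian of $F$ a well-defined symmetric bilinear form on $\mathsf{T}_m\mathcal{G}$, the vanishing of its tangent--tangent and mixed blocks lets it descend to $E=\mathsf{T}\mathcal{G}|_M/\mathsf{T}M$, and since $X^L|_M$ and $X^R|_M$ both represent $X$, all three displayed expressions coincide with the descended form. The one place where your justification is thinner than your claim is the assertion that $\mathsf{T}(\mathrm{inv})$ induces $-\mathrm{id}$ on $E$: the facts $\mathrm{inv}\circ\imath=\imath$ and $\mathrm{inv}^2=\mathrm{id}_{\mathcal{G}}$ only show that the induced map is a linear involution of each fibre $E_m$, which does not by itself exclude $+\mathrm{id}$ or a nontrivial involution; one must actually differentiate $g\circ g^{-1}=\imath(\mathsf{t}(g))$ at units (equivalently invoke $\mathrm{inv}_*X^L=-X^R$, which you do state as the operative identity). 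Since that identity is a standard Lie-groupoid fact and you cite it explicitly, this is a presentational rather than a mathematical gap; note moreover that for the conclusion $g^{F^*}=g^F$ it would suffice to know the induced involution is $\pm\mathrm{id}$, by bilinearity of $g^F$.
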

A symmetric 2-form on $E$ we will call a \emph{pseudometrics}. We call the contrast function $F$ \emph{regular} if $g^F$ has constant rank as a morphism of vector bundles $g^F:E\to E^*$, and \emph{metric} if $g^F$ is a pseudo-Riemannian (non-degenerate) metric on $E$, i.e. $g^F:E\to E^*$ is an isomorphism. If $F\ge 0$ is metric, then $g^F$ is a Riemannian metric on $E$.

\begin{remark}
Metric contrast functions on $M\ti M$ are called \emph{yokes} in \cite{Barndorff1987} (see also \cite{Barndorff1997,Blesild1991} for the idea of generating tensors from yokes).
\end{remark}

For a metric contrast function $F$ on $\mathcal{G}$ we define on $E=\mathrm{ Lie}(\mathcal{G})$ the Lie($ \mathcal{G} $)-connections
$\nabla_X^F $ and  $\nabla_X^{F^\ast} $ by
\be\label{b} g(\nabla_X^F Y,Z)=X^LY^LZ^RF|_M \,\ \text{and}\,\  g(\nabla_X^{F^\ast} Y,Z)=X^LY^LZ^RF^*|_M\,, \ee
where $ F^\ast=F\circ\mathrm{inv} $.
\begin{theorem}[\cite{grabowska19}]
	$ \nabla^F $ and $ \nabla^{F^\ast} $ is a pair of dual, torsion-free $E$-connections on $E$, such that $\frac{1}{2}\left(\nabla^F+\nabla^{F^\ast} \right)$ is the Levi-Civita connection $\nabla^g$ with respect to $g=g^F=g^{F^*}$. In case of $F=F^\ast$, both connections $ \nabla^F =\nabla^{F^\ast} $ are equal to each other and moreover equal to the Levi-Civita connection $\nabla^g$ for $g$.

	The three tensor defined by
$$ T^F(X,Y,Z)=g(\nabla_X^FY-\nabla_X^{F^\ast}Y,Z) $$ is totally symmetric, $T\in\bigodot^3E^\ast$. The equations $$g\left(\nabla_X^FY,Z \right)=g\left(\nabla_X^gY,Z \right)+\frac{1}{2}T(X,Y,Z)$$ and $$g\left(\nabla_X^{F^\ast}Y,Z \right)=g\left(\nabla_X^gY,Z \right)-\frac{1}{2}T(X,Y,Z)\,$$
are satisfied, in particular, $T=0$ if $F=F^\ast$.
\end{theorem}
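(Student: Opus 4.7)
The plan is to verify, in order, torsion-freeness of each of $\nabla^F$ and $\nabla^{F^*}$, then duality, then identify the average with $\nabla^g$ by uniqueness, and finally deduce total symmetry of $T^F$ from the two preceding steps. Torsion-freeness follows at once from $[X^L, Y^L] = [X, Y]^L$: since $g$ is nondegenerate, testing against $Z$ gives
\[
g(\nabla^F_X Y - \nabla^F_Y X, Z) = (X^L Y^L - Y^L X^L)(Z^R F)\big|_M = [X, Y]^L Z^R F\big|_M = g([X, Y], Z),
\]
and the identical calculation with $F$ replaced by $F^*$ handles $\nabla^{F^*}$.

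For duality, the auxiliary fact I will use is that for any $f \in C^\infty(\mathcal{G})$ with $f|_M = h$, one has $\alpha(X)(h) = (X^L f - X^R f)|_M$. This is a consequence of the pointwise decomposition $X^L|_m - X^R|_m = \alpha(X)|_m$ in $T_m \mathcal{G}$, which itself follows from $\alpha = \mathsf{T}\mathsf{s} - \mathsf{T}\mathsf{t}$ together with $\mathsf{T}\mathsf{t}(X^L) = 0$ and $\mathsf{T}\mathsf{s}(X^R) = 0$. Applying the fact to $f = Y^L Z^R F$ yields
\[
\alpha(X)(g(Y, Z)) = g(\nabla^F_X Y, Z) - X^R Y^L Z^R F\big|_M,
\]
so it remains to identify $X^R Y^L Z^R F|_M$ with $-g(Y, \nabla^{F^*}_X Z)$. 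This is the main computational point: using $\mathrm{inv}_* V^L = -V^R$ (because $\mathrm{inv}$ exchanges source and target fibers and induces $-\operatorname{id}$ on $E$), one gets $V^L(h^*) = -(V^R h)^*$ for any $h \in C^\infty(\mathcal{G})$; iterating on $F^*$ gives $X^L Z^L Y^R F^* = -(X^R Z^R Y^L F)^*$, which restricted to $M$ (where $\mathrm{inv}$ is the identity) and simplified via $[Y^L, Z^R] = 0$ is exactly the required identity.

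Once duality is established, applying the same argument to $F^*$ yields the dual pair $(\nabla^{F^*}, \nabla^F)$; adding the two duality identities gives the metric property of $\tfrac12(\nabla^F + \nabla^{F^*})$, which is also torsion-free, so by the uniqueness of the Levi-Civita connection proved above it coincides with $\nabla^g$. The decomposition formulas for $\nabla^F$ and $\nabla^{F^*}$ are then immediate. Total symmetry of $T^F$ splits as follows: the $X \leftrightarrow Y$ symmetry holds because $[X, Y]$ cancels in $(\nabla^F_X Y - \nabla^{F^*}_X Y) - (\nabla^F_Y X - \nabla^{F^*}_Y X)$ by torsion-freeness of both connections, while the $Y \leftrightarrow Z$ symmetry is obtained by subtracting the two duality identities. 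The case $F = F^*$ is immediate since the defining formulas for $\nabla^F$ and $\nabla^{F^*}$ then coincide, forcing $T^F = 0$. The one obstacle worth flagging is the inverse-map sign calculation $X^L Z^L Y^R F^*|_M = -X^R Y^L Z^R F|_M$, which requires careful tracking of the minus signs coming from $\mathrm{inv}_*$.
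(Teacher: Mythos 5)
Your argument is essentially sound and, for the most part, runs along the same lines as the computations this paper gives for the singular-case analogues of this theorem (the statement itself is only quoted here from \cite{grabowska19}; the closest in-paper proofs are those of the duality Proposition, Theorem \ref{t1} and Theorem \ref{t3}). The torsion computation via $[X^L,Y^L]=[X,Y]^L$ and the duality computation via $(X^L-X^R)(Y^LZ^RF)|_M=\alpha(X)\,g(Y,Z)$ together with the inversion identity $X^LZ^LY^RF^*|_M=-X^RZ^RY^LF|_M$ are exactly the paper's mechanism, and your explicit tracking of the signs coming from $\mathrm{inv}_*V^L=-V^R$ is if anything more careful than the displayed computation in the paper, which silently permutes the left/right decorations on $Y$ and $Z$. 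Where you genuinely diverge is the total symmetry of $T^F$: the paper (Theorem \ref{t3}) observes that $F-F^*$ has vanishing second jet along $M$, so $X^LY^LZ^R(F-F^*)|_M$ is automatically a well-defined, totally symmetric third derivative; you instead obtain the $X\leftrightarrow Y$ symmetry from torsion-freeness of both connections and the $Y\leftrightarrow Z$ symmetry by subtracting the two duality identities. Both are correct; the jet argument is shorter, while yours makes transparent that the symmetry is forced by the structure $(g,\nabla,\nabla^*)$ alone and would survive in any setting where torsion-freeness and duality hold.

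One omission you should repair: the theorem asserts that $\nabla^F$ and $\nabla^{F^*}$ are $E$-connections, and you never check this. Concretely, you need (i) that the right-hand side $X^LY^LZ^RF|_M$ is $C^\infty(M)$-linear in $Z$, so that nondegeneracy of $g$ really produces a well-defined section $\nabla^F_XY$; (ii) $C^\infty(M)$-linearity in $X$; and (iii) the Leibniz rule in $Y$. All three follow routinely from $(fX)^L=f^LX^L$, $(fZ)^R=f^RZ^R$, $X^Lf^R=0$ and $f^L|_M=f^R|_M=f$ --- this is exactly the verification carried out in the proof of Theorem \ref{t1} --- but without it the uniqueness argument identifying $\tfrac12\left(\nabla^F+\nabla^{F^*}\right)$ with the Levi-Civita connection has nothing to apply to.
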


We will call such a $(g,\nabla, \nabla^\ast)$-structure on a Lie algebroid $E=\mathrm{Lie}(\mathcal{G})$, with $ \nabla$ and $\nabla^\ast$ being dual $E$-connections with respect to the metric $g$, a \emph{dualistic structure}.

If the contrast function $F$ is not metric, the Levi-Civita connection does not have a clear sense, but both symmetric tensors $g^F$ and $T^F$ are still properly defined. However, they do not depend on the Lie algebroid structure on $E$ as in the case of connections.

\section{Singular contrast functions}
Now, we do not assume that a contrast function $F$ on $\cG$ is metric. Of course, it seems reasonable to expect some regularity conditions: we assume that $F$ is regular, i.e. the set
$$K^F=\{ X\in E=\mathrm{Lie}(\cG)\ |\ g^F(X,\cdot)=0\ \}$$
is a constant-rank subbundle of $E$. Such forms $g$ on vector bundles $E$ we call \emph{semi-Riemannian}. It is clear that the semi-Riemannian $g^F$ induces on the quotient bundle $L=E/{K^F}$ a pseudo-metric
\be
g^{[F]}(\bar{X},\bar{Y})=g^F(X,Y)\,,
\ee
where $\bar{X},\bar{Y}$ are cosets of $X,Y$ in $L=E/{K^F}$. We will call it the \emph{transversal (pseudo-Riemannian) metric}.

For the contrast function $F$ on $\mathcal{G}$ and $X,Y,Z\in\Sec(E)$  (in general we will not distinguish elements of a vector bundle from their sections; this should be clear from the context) we can still try to define
\be\label{b1} g(\nabla_X^F Y,Z)=X^LY^LZ^RF|_M \,\ \text{and}\,\  g(\nabla_X^{F^\ast} Y,Z)=X^LY^LZ^RF^*|_M\,, \ee
where $ F^\ast=F\circ\mathrm{inv} $. This time, however, this does not defines
uniquely $\nabla_X^FY$ and $\nabla_X^{F^\ast}Y$ as sections of $E$, since $g^F$ is singular, so $\nabla_X^FY$ and $\nabla_X^{F^\ast}Y$ are defined modulo ${K^F}$ if they are defined at all. To assure that the definition makes sense, we have first to assume that
\be\label{ko}X^LY^LZ^RF|_M=0\,,\ \text{and}\ X^LY^LZ^RF^*_{|M}=-X^RY^RZ^LF_{|M}=0\,,
\ee
 for for $X,Y\in \Sec(E)$ and $Z\in\ckf$, where $\ckf= \Sec({K^F})$.
We call such a regular contrast function \emph{Koszul contrast function}. We get easily the following.
\begin{proposition}
If $F$ is a Koszul contrast function on $(\cG)$, then the equations
(\ref{b1}) uniquely define $\nabla_X^FY$ and $\nabla_X^{F^\ast}Y$ as sections of $L^F=E/{K^F}$.
\begin{remark}
Because one can always choose a vector subbundle $E/{K^F}$ in $E$ as supplementary to ${K^F}$, we can view equivalently $\nabla_XY$ as a section of $E$ (taking values in $E/{K^F}\subset E$). However, this identification strongly depends on the choice of $E/{K^F}$ as a subbundle in $E$.
\end{remark}

\end{proposition}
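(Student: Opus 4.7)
The plan is to show that, for fixed $X,Y\in\Sec(E)$, the right-hand side of (\ref{b1}) descends to a $C^\infty(M)$-linear functional on $\Sec(L^F)$, and then invoke the non-degeneracy of the transversal pseudo-metric $g^{[F]}$ on $L^F$ to obtain a unique preimage $\nabla^F_XY\in\Sec(L^F)$. The same recipe will give $\nabla^{F^*}_XY$.

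For $C^\infty(M)$-linearity in $Z$, I would compute $X^LY^L(fZ)^RF$ for $f\in C^\infty(M)$. Since $(fZ)^R=f^R\cdot Z^R$ with $f^R=\mathsf{t}^*f$, and since the identities recorded in the Proposition on $\mathrm{Lie}(\mathcal{G})$ give $X^L(f^R)=Y^L(f^R)=0$ (left-invariant fields are tangent to $\mathsf{t}$-fibers), the Leibniz expansion of $X^LY^L(f^RZ^RF)$ collapses to $f^R\cdot X^LY^LZ^RF$, whose restriction to $M$ equals $f\cdot X^LY^LZ^RF|_M$. The Koszul assumption (\ref{ko}) then says precisely that $X^LY^LZ^RF|_M=0$ whenever $Z\in\ckf$, so the functional descends to a $C^\infty(M)$-linear map $\Sec(L^F)\to C^\infty(M)$, i.e., to a section $\zeta\in\Sec((L^F)^*)$.

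Because $g^F$ annihilates $K^F$ in either slot, it induces $g^{[F]}$ on $L^F$, non-degenerate by the regularity hypothesis; its musical map $L^F\to(L^F)^*$ is then a bundle isomorphism, so there is a unique $A\in\Sec(L^F)$ with $g^{[F]}(A,\bar{Z})=X^LY^LZ^RF|_M$ for every $Z\in\Sec(E)$. This $A$ is the required $\nabla^F_XY$, with the equation (\ref{b1}) interpreted via the factorization $g^F(\,\cdot\,,Z)=g^{[F]}(\overline{\,\cdot\,},\bar Z)$, so that both sides depend on $\nabla^F_XY$ only through its coset modulo $K^F$. The construction of $\nabla^{F^*}_XY$ is identical, now using the second half of (\ref{ko}) for the vanishing on $\ckf$; note that $g^{F^*}=g^F$ yields $K^{F^*}=K^F$, so both connections take values in the same quotient bundle $L^F$. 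No real obstacle appears beyond the Koszul condition itself, whose role is exactly to render $X^LY^LZ^RF|_M$ insensitive to altering $Z$ within its $K^F$-coset, and which is therefore the sole ingredient that makes the definition unambiguous.
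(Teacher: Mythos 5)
Your argument is correct and is exactly the intended one: the paper states this proposition with only ``we get easily the following'' and no written proof, and you supply the three ingredients that make it work --- tensoriality in $Z$ via $(fZ)^R=f^RZ^R$ and $X^L(f^R)=Y^L(f^R)=0$, descent to $(L^F)^*$ via the Koszul condition (\ref{ko}), and invertibility of the musical map of $g^{[F]}$ on $L^F=E/K^F$ coming from regularity. Your closing observation that $g^{F^*}=g^F$ forces $K^{F^*}=K^F$, so both connections land in the same quotient, is a detail the paper leaves implicit and is worth having on record.
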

\begin{proposition} The `duality' is still present in the sense that

\be\label{2}g^{F}(\nabla^{F}_XY,Z)+g^{F}(Y,\nabla^{F^*}_XZ)=\za(X)g^{F}(Y,Z)\,,\ee
or, alternatively,
$$g^{[F]}(\nabla^{F}_XY,\bar{Z})+g^{[F]}(\bar{Y},\nabla^{F^*}_XZ)=\za(X)g^{[F]}(\bar{Y},\bar{Z})\,.$$
\end{proposition}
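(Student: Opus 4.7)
The plan is to unpack both sides of (\ref{2}) using the defining relations (\ref{b1}), convert the $F^\ast$-expression into an $F$-expression via the inversion, and reshuffle using the commutation relations between left- and right-invariant vector fields from the Proposition. Since $g^F$ annihilates $K^F$, (\ref{2}) is well-posed in spite of the ambiguity of $\nabla^F_XY$ and $\nabla^{F^\ast}_XZ$ modulo $K^F$, and the equivalent statement for $g^{[F]}$ on $L^F=E/K^F$ is then immediate from the definition of $g^{[F]}$.

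The key technical ingredient is that $\mathsf{T}\,\mathrm{inv}$ acts as $-\mathrm{id}$ on $E$ at the units, which globally gives $\mathrm{inv}_\ast X^L=-X^R$ and $\mathrm{inv}_\ast X^R=-X^L$ on $\cG$. Iterating yields $V_1\cdots V_k F^\ast|_M=(-1)^k\widetilde V_1\cdots\widetilde V_k F|_M$ for any sequence $V_1,\dots,V_k$ of left- or right-invariant vector fields, where $\widetilde V$ swaps the $L/R$ label. Applied with $k=3$ to the defining formula for $\nabla^{F^\ast}$, together with the symmetry of $g^F$, this gives
\[
g^F(Y,\nabla^{F^\ast}_XZ)=X^L Z^L Y^R F^\ast|_M=-X^R Z^R Y^L F|_M,
\]
which, using $[X^R,Y^L]=[Z^R,Y^L]=0$, further simplifies to $-Y^L X^R Z^R F|_M$. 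On the other side, $[X^L,Y^L]=[X,Y]^L$ rewrites $g^F(\nabla^F_XY,Z)=X^L Y^L Z^R F|_M$ as $Y^L X^L Z^R F|_M+[X,Y]^L Z^R F|_M$. Summing,
\[
g^F(\nabla^F_XY,Z)+g^F(Y,\nabla^{F^\ast}_XZ)=Y^L(X^L-X^R)Z^R F|_M+[X,Y]^L Z^R F|_M.
\]

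Commuting $Y^L$ past $(X^L-X^R)$ via $[Y^L,X^L-X^R]=-[X,Y]^L$ (immediate from $[Y^L,X^L]=-[X,Y]^L$ and $[Y^L,X^R]=0$) produces a $-[X,Y]^L Z^R F|_M$ that cancels the correction term, leaving $(X^L-X^R)Y^L Z^R F|_M$. The concluding step is the geometric identity $X^L|_m-X^R|_m=\za(X)|_m$ inside $\mathsf{T}_m M$: both $X^L|_m$ and $X^R|_m$ project to $X$ in $E_m=\mathsf{T}_m\cG/\mathsf{T}_m M$, so their difference is tangent to $M$, and applying $\mathsf{T}\mathsf{s}$ gives $\za(X)-0=\za(X)$ while being the identity on $\mathsf{T}_m M$. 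Consequently $(X^L-X^R)f|_M=\za(X)(f|_M)$ for any $f\in C^\infty(\cG)$, and taking $f=Y^L Z^R F$ (whose restriction to $M$ is $g^F(Y,Z)$) yields $\za(X)\,g^F(Y,Z)$, establishing (\ref{2}). The only subtle point in the argument is the sign bookkeeping in the inversion identity; the rest is a mechanical application of the bracket relations from the Proposition.
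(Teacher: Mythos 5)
Your proof is correct and follows essentially the same route as the paper's: convert the $F^\ast$-term into an $F$-term via $\mathrm{inv}_\ast X^L=-X^R$, and then recognize $(X^L-X^R)|_M=\za(X)$ acting on $Y^LZ^RF|_M=g^F(Y,Z)$. You are in fact more explicit than the paper about the commutator bookkeeping (the cancelling $[X,Y]^LZ^RF|_M$ terms), which the paper suppresses by tacitly using the $L/R$-swap symmetries of the third derivatives of $F$ along $M$.
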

\begin{proof}
We have
\beas g^{F}(\nabla^{F}_XY,Z)+g^{F}(Y,\nabla^{F^*}_XZ)&=& X^LY^LZ^RF_{|M}+X^LY^LZ^RF^*_{|M}\\
&=& X^LY^LZ^RF_{|M}-X^RY^RZ^LF_{|M}=(X^L-X^R)Y^LZ^RF_{|M}\\
&=& \za(X)g^F(Y,Z)\,.
\eeas
\end{proof}
It is easy to see that (\ref{2}), in turn, implies that $\nabla^F_X{\ckf}\subset {\ckf}$ and $\nabla^{F^*}_X{\ckf}\subset {\ckf}$, so that
$\nabla^F_X$ (and similarly $\nabla^{F^*}_X$) is defined as an operator on sections of the vector bundle $L^F=E/{K^F}$. Let us denote it $\nabla^{[F]}_X\bar{Y}$ (resp. $\nabla^{[F^*]}_X\bar{Y}$).
\begin{theorem}\label{t1}
The above $\nabla^{[F]}_X\bar{Y}$ and $\nabla^{[F^*]}_X\bar{Y}$ are $E$-connections on $L^F$, dual with respect to the pseudometric $g^{[F]}$. Moreover they are \emph{torsion-free} in the sense that
\be\label{tf}\nabla_X^{[F]}\bar{Y}-\nabla_Y^{[F]}\bar{X}=\overline{[X,Y]}\,.
\ee
\end{theorem}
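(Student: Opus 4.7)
The plan is to read the three assertions --- $E$-connection axioms, duality, and torsion-freeness --- directly off the defining formulae $g^{F}(\nabla^{F}_XY,Z)=X^LY^LZ^RF|_M$ and its $F^*$-counterpart, using three ingredients supplied by the earlier material: the multiplicative behaviour $(fX)^L=f^L\cdot X^L$ of left-invariant vector fields together with $f^L|_M=f$, the derivation property $X^L(f^L)=(\za(X)f)^L$, and the bracket identity $[X^L,Y^L]=[X,Y]^L$. The duality
$$g^{[F]}(\nabla^{[F]}_X\bar Y,\bar Z)+g^{[F]}(\bar Y,\nabla^{[F^*]}_X\bar Z)=\za(X)g^{[F]}(\bar Y,\bar Z)$$
is nothing more than the already established equation (\ref{2}) read on the quotient $L^F=E/K^F$, so only the two remaining assertions require work.

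For the $E$-connection axioms on $\nabla^{[F]}$, $C^\infty(M)$-linearity in $X$ is immediate: $(fX)^L=f^L X^L$ and $f^L|_M=f$ give
\[
g^{[F]}(\nabla^{[F]}_{fX}\bar Y,\bar Z)=(fX)^LY^LZ^RF|_M=f\cdot X^LY^LZ^RF|_M=f\cdot g^{[F]}(\nabla^{[F]}_X\bar Y,\bar Z),
\]
so the nondegeneracy of $g^{[F]}$ on $L^F$ yields $\nabla^{[F]}_{fX}=f\nabla^{[F]}_X$. For the Leibniz rule in $\bar Y$, expanding $X^L\bigl(f^L\cdot Y^LZ^RF\bigr)=X^L(f^L)\cdot Y^LZ^RF+f^L\cdot X^LY^LZ^RF$ and restricting to $M$ gives $\za(X)(f)\cdot g^{[F]}(\bar Y,\bar Z)+f\cdot g^{[F]}(\nabla^{[F]}_X\bar Y,\bar Z)$, since $Y^LZ^RF|_M=g^F(Y,Z)$ by the proposition defining $g^F$. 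Nondegeneracy then yields $\nabla^{[F]}_X(f\bar Y)=f\nabla^{[F]}_X\bar Y+\za(X)(f)\bar Y$. The computation for $\nabla^{[F^*]}$ is entirely parallel, since only the scalar function differentiated changes.

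Torsion-freeness is the shortest step: antisymmetrisation gives
\[
g^{[F]}\bigl(\nabla^{[F]}_X\bar Y-\nabla^{[F]}_Y\bar X,\bar Z\bigr)=[X^L,Y^L]\,Z^RF|_M=[X,Y]^LZ^RF|_M=g^F([X,Y],Z)=g^{[F]}(\overline{[X,Y]},\bar Z),
\]
and nondegeneracy of $g^{[F]}$ on $L^F$ delivers (\ref{tf}). The corresponding identity for $\nabla^{[F^*]}$ follows either by rerunning the argument with $F^*$ in place of $F$, or, more economically, by combining torsion-freeness of $\nabla^{[F]}$ with the duality already in hand.

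The genuinely delicate work has in truth already been dispatched in arriving at the theorem: the Koszul condition (\ref{ko}) is what makes the right-hand sides of (\ref{b1}) insensitive to the $K^F$-ambiguity in $Y$, while (\ref{2}) is what forces $\nabla^F_X\ckf\subset\ckf$ so that the operator descends to $L^F$. Once these two facts are in place, the remaining verifications are literally the same calculations as in the metric case treated in \cite{grabowska19}, now tested against the nondegenerate pseudometric $g^{[F]}$ on $L^F$ rather than against $g^F$ on $E$; I do not foresee any new obstacle.
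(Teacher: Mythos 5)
Your proposal is correct and follows essentially the same route as the paper's own proof: $C^\infty(M)$-linearity in $X$ via $(fX)^L=f^LX^L$ with $f^L|_M=f$, the Leibniz rule via $X^L(f^L)=(\za(X)f)^L$ and $Y^LZ^RF|_M=g^F(Y,Z)$, duality read off from identity (\ref{2}), and torsion-freeness from $[X^L,Y^L]=[X,Y]^L$, all tested against the nondegenerate $g^{[F]}$ on $L^F$. Your closing remark correctly locates the real content in the Koszul condition (\ref{ko}) and the invariance $\nabla^F_X\ckf\subset\ckf$, which the paper likewise establishes before the theorem.
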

\begin{proof}
Let us first prove that $ \nabla^F $ and $ \nabla^{F^\ast} $ are $E$-connections. Linearity with respect to $X,\bar{Y}$ is clear.
We have further $$g^{[F]}(\nabla_{fX}^F\bar{Y},\bar{Z})=(fX)^LY^LZ^RF|_M=f^L|_MX^LY^LZ^RF|_M=fg^{[F]}(\nabla_X^F\bar{Y},\bar{Z})\,,$$
whence $\nabla_{fX}^F\bar{Y}=f\nabla_{X}^F\bar{Y}$.
As,
\beas g^{[F]}(\nabla_X^Ff\bar{Y},\bar{Z})&=&X^L(fY)^LZ^RF|_M=X^Lf^LY^LZ^RF|_M\\ &=&({\za}(X)f)^LY^LZ^RF|_M+f^LX^LY^LZ^RF|_M\\
&=&{\za}(X)(f)g^{[F]}(\bar{Y},\bar{Z})+fg^{[F]}(\nabla_X^F\bar{Y},\bar{Z})\,,\eeas
we get $\nabla_X^Ff\bar{Y}={\za}(X)(f)\bar{Y}+f\nabla_X^F\bar{Y}$.
Similarly for $ \nabla^{F^\ast} $.
Identity (\ref{2}) immediately implies that $ \nabla_X^{F^\ast}=(\nabla_X^F)^\ast$.

Note finally that $\nabla^F$ and  $ \nabla^{F^\ast} $ are torsion-free.
Indeed
\beas g^{[F]}(\nabla_X^F\bar{Y}-\nabla_Y^F\bar{X},\bar{Z})&=&X^LY^LZ^RF|_M-Y^LX^LZ^RF|_M\\
&=&[X,Y]^LZ^R|F_M=g^{[F]}(\overline{[X,Y]},\bar{Z})\,.
\eeas
\end{proof}
\begin{corollary}
The kernel of the semi-Riemannian metric induced by the Koszul contrast function is a Lie subalgebroid of $E$,
$$[{\ckf},{\ckf}]\subset {\ckf}\,.$$
\end{corollary}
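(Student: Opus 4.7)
The plan is to deduce the statement directly from the torsion-free property (\ref{tf}) of the connection $\nabla^{[F]}$ established in Theorem \ref{t1}. The key observation is that passing to sections of the quotient bundle $L^F = E/K^F$ annihilates precisely the elements of $\ckf$.

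More concretely, suppose $X, Y \in \ckf$. Then by definition their cosets in $L^F$ are the zero section: $\bar{X} = 0$ and $\bar{Y} = 0$. Since $\nabla^{[F]}$ is an $E$-connection on $L^F$, and any connection is $\mathbb{R}$-linear (in fact, even a derivation in the second slot) in the second argument, we have
$$
\nabla_X^{[F]} \bar{Y} \;=\; \nabla_X^{[F]} 0 \;=\; 0, \qquad \nabla_Y^{[F]} \bar{X} \;=\; 0.
$$
Applying (\ref{tf}) then gives $\overline{[X,Y]} = 0$, which means the section $[X,Y]$ of $E$ takes values in $K^F$ pointwise. Hence $[X,Y] \in \ckf$, proving that $\ckf$ is closed under the bracket.

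There is no serious obstacle here; all the work has already been done in proving Theorem \ref{t1}. The corollary is essentially a repackaging of the torsion-freeness statement: the vanishing of $\overline{[X,Y]}$ for $X, Y \in \ckf$ is an automatic consequence of the fact that torsion-freeness is formulated on $L^F$ rather than on $E$ itself. Combined with the constant-rank hypothesis (so that $\ckf$ actually is the section module of a vector subbundle), this is exactly the definition of a Lie subalgebroid, since the anchor $\za$ restricted to $K^F$ is automatically a vector bundle map into $\sT M$ and the Leibniz rule is inherited from $E$.
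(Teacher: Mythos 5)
Your proof is correct and follows essentially the same route as the paper: the authors likewise take $X,Y\in\ckf$, note $\bar X=\bar Y=0$, and conclude $\overline{[X,Y]}=\nabla_X^{[F]}\bar{Y}-\nabla_Y^{[F]}\bar{X}=0$ from the torsion-freeness identity (\ref{tf}). Your additional remarks about $\R$-linearity of the connection and the constant-rank hypothesis only make explicit what the paper leaves implicit.
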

\begin{proof}
If $\bar X=\bar Y=0$, then
$$\overline{[X,Y]}=\nabla_X^F\bar{Y}-\nabla_Y^F\bar{X}=0\,.
$$
\end{proof}
\begin{definition} The above $E$-connections $\nabla^{[F]}_X\bar Y$ and $\nabla^{[F^*]}_X\bar Y$ on $L^F$ we will call \emph{Koszul  connections} of the contrast function $F$. If $\nabla^{[F]}=\nabla^{[F^\ast]} $ ($\nabla^{[F]}$ is self-dual), we speak about \emph{transversal Levi-Civita connection} $\nabla^{[g]}$. The transversal Levi-Civita connection is equivalent to the so called \emph{Koszul derivative} for the semi-Riemannian Lie algebroid $(E,g^F)$ (cf. \cite{Kup96} for canonical algebroids).
\end{definition}

\begin{definition}
Let $( E, g)$ be a semi-Riemannian Lie algebroid over $M$. A $\R$-bilinear function
$$D: \Sec(E)\times \Sec(E)\ni(X,Y)\to D_XY\in  \Sec(E)$$
is called a \emph{Koszul derivative}
on $(E,g)$ if, for every $X, Y, Z, W\in\Sec(E)$ and $f\in C^\infty(M)$,
\begin{description}
\item{(a)} $g(D_{fX}Z, W) = fg(D_XZ, W)$\,;
\item{(b)} $g(D_X(fY), W) = \za(X)(f)g(Y, W) + fg(D_XY, W)$\,;
\item{(c)} $\za(Z)g(X, Y) = g(D_ZX, Y) + g(X, D_ZY)$\,;
\item{(d)}  $g(D_XY, W)- g(D_YX, W) = g([X, Y], W)$\,.
\end{description}
Here, $\za(X)$ is the anchor of the Lie algebroid $E$.
\end{definition}
In view of Theorem \ref{t1} we get immediately the following.
\begin{proposition} If $F$ is a Koszul contrast function on a Lie groupoid $\cG$ and $F=F^*$, then $\nabla^F_XY$,  taking values in the quotient $E/{K^F}$ viewed as a complementary subbundle of the kernel ${K^F}$ in the Lie algebroid $E=\mathrm{Lie}(\cG)$, defines a Koszul derivative of the semi-Riemannian algebroid $(E,g^F)$ by
$$D_XY=\nabla^F_XY\,.$$ The corresponding transversal Levi-Civita connection in $E/{K^F}$ is
$\nabla^F_X\bar Y$. In particular the Koszul derivative for $(E,g^F)$ always exist.
\end{proposition}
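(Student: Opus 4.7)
The plan is to verify, step by step, the four Koszul derivative axioms (a)--(d) directly from the structural properties of $\nabla^{[F]}$ already obtained in Theorem \ref{t1} and the Proposition preceding it, and then deduce existence by the mere observation that a complement to $K^F$ in $E$ always exists (smoothly, by standard vector bundle theory).

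First I would fix, once and for all, a smooth vector subbundle $E'\subset E$ complementary to $K^F$, so that $E=K^F\oplus E'$ and $L^F=E/K^F$ is canonically identified with $E'$ via the projection. Under this identification I can view $\nabla^{[F]}_X\bar Y$ as a section of $E'\subset E$ and set $D_XY:=\nabla^{[F]}_XY$ as a section of $E$. The key observation is that the four axioms are stated exclusively in terms of $g^F$-pairings, and $g^F(V,W)$ depends only on the classes $\bar V,\bar W\in L^F$; therefore the apparent dependence of $D_XY$ on the choice of $E'$ is invisible to axioms (a)--(d), which makes the proposition well posed.

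Next I would discharge the axioms one by one. For (a) and (b), the $E$-connection properties of $\nabla^{[F]}$ established in Theorem \ref{t1} give
\[
\nabla^{[F]}_{fX}\bar Y=f\nabla^{[F]}_X\bar Y,\qquad \nabla^{[F]}_X(\overline{fY})=\za(X)(f)\bar Y+f\nabla^{[F]}_X\bar Y,
\]
which, after pairing with $\bar W$ under $g^{[F]}$, translate directly into (a) and (b). For (c), I would invoke the duality identity (\ref{2}) together with the hypothesis $F=F^*$, which yields $\nabla^F=\nabla^{F^*}$ and hence
\[
\za(Z)g^F(X,Y)=g^F(\nabla^F_ZX,Y)+g^F(X,\nabla^{F^*}_ZY)=g^F(D_ZX,Y)+g^F(X,D_ZY).
\]
For (d), I would use the torsion-free identity (\ref{tf}): $\nabla^{[F]}_X\bar Y-\nabla^{[F]}_Y\bar X=\overline{[X,Y]}$, which on pairing with $\bar W$ gives $g^F(D_XY-D_YX,W)=g^F([X,Y],W)$, i.e.\ axiom (d).

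The remaining claims are then immediate. Since under $F=F^*$ we have $\nabla^{[F]}=\nabla^{[F^*]}$, the connection $\nabla^{[F]}$ is self-dual, hence by the Definition of the transversal Levi--Civita connection it coincides with $\nabla^{[g]}$ on $E/K^F$. Existence of a Koszul derivative in full generality follows since a smooth complement $E'$ always exists (partition of unity / choice of an auxiliary Riemannian metric on $E$), so the construction above is unconditional. I do not expect any genuine obstacle: the only mildly subtle point, which I would flag explicitly, is that $D_XY$ as a section of $E$ is not canonical but depends on the choice of $E'$, while its class modulo $K^F$ is canonical; the axioms see only this class, so nothing is lost.
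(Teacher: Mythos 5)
Your proposal is correct and follows exactly the route the paper intends: the paper states this proposition with no written proof beyond ``In view of Theorem \ref{t1} we get immediately the following,'' and your verification of axioms (a)--(d) from the $E$-connection properties, the duality identity (\ref{2}) with $F=F^*$, and the torsion-free identity (\ref{tf}) is precisely the omitted routine check, including the correct well-posedness remark that the axioms only see $D_XY$ modulo $K^F=\ker g^F$ while the lift to a section of $E$ depends on the chosen complement.
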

\begin{theorem}\label{t2} A Koszul derivative $D$ for a semi-Riemannian Lie algebroid $( E, g)$ with the kernel of the semi-Riemannian metric $K$ exists if and only if
the Lie derivatives of $g$ in the directions of $K$ vanish,
$$\Li_Ug=0\ \text{for}\ U\in\Sec(K)\,.$$
In the latter case $D$ is uniquely determined, up to identification of $E/K$ with a complementary subbundle of $K$ in $E$, by the \emph{Koszul formula}
\bea\label{LC}
&& 2g(D_XY,Z) \\ \nn
&&={\za}(X)g(Y,Z)+{\za}(Y)g(Z,X)-{\za}(Z)g(X,Y)+g(\left[X,Y\right],Z )-g(\left[Y,Z\right],X )-g(\left[X,Z\right],Y)\,.
\eea
\end{theorem}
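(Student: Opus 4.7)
My plan is to prove both directions of the equivalence by first deriving the Koszul formula (\ref{LC}) as a consequence of conditions (c) and (d), and then checking when its right-hand side gives a consistent definition modulo $K$.

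\emph{Necessity and uniqueness.} I would take three cyclic permutations of (c) (permuting the roles of $X,Y,Z$), add the first two and subtract the third, and then use (d) to rewrite $g(D_YX,Z)$, $g(Y,D_XZ)$ and $g(X,D_ZY)$ as $g(D_XY,Z)$ plus bracket terms; the cancellations reproduce the right-hand side of (\ref{LC}). Since $g$ descends to the non-degenerate pairing $g^{[F]}$ on $E/K$, this determines $D_XY$ uniquely modulo $K$ and gives the Koszul formula. For the necessity of the Lie derivative condition, fix $U\in\Sec(K)$ and expand $(\Li_U g)(X,Y) = \za(U)g(X,Y) - g([U,X],Y) - g(X,[U,Y])$ using (c) with $Z=U$ and (d) applied to the two brackets; after cancellations this collapses to $(\Li_Ug)(X,Y) = g(D_XU,Y) + g(D_YU,X)$. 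Applying (c) to the identically vanishing $\za(X)g(U,Y)=0$ and noting $g(U,D_XY)=0$ forces $g(D_XU,Y)=0$ for every $Y$, and symmetrically in $X$, so $\Li_U g = 0$.

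\emph{Sufficiency.} Assume $\Li_U g = 0$ for every $U\in\Sec(K)$, and define $g(D_XY,Z)$ by the right-hand side of (\ref{LC}). The crucial consistency check is that this right-hand side, viewed as a linear form in $Z$, factors through $E/K$: when $Z\in\Sec(K)$ the three terms $\za(X)g(Y,Z)$, $\za(Y)g(Z,X)$ and $g([X,Y],Z)$ vanish, while the remaining three terms assemble into $-(\Li_Z g)(X,Y)$, which is zero by hypothesis. Non-degeneracy of $g^{[F]}$ on $E/K$ therefore yields a unique class $\overline{D_XY}\in\Sec(E/K)$. Conditions (a) and (b) then follow from standard computations using the Leibniz rule $[fX,Y]=f[X,Y]-\za(Y)(f)X$ and the derivation property of $\za(X)$ on $C^\infty(M)$, with the $f$-twisted corrections cancelling pairwise. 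Condition (c) is obtained by symmetrizing (\ref{LC}) in $(Y,Z)$, where symmetry of $g$ and antisymmetry of the bracket eliminate all non-$\za(X)g(Y,Z)$ terms; condition (d) follows by antisymmetrizing in $(X,Y)$, leaving exactly $2g([X,Y],W)$. Lifting $\overline{D_XY}$ through any chosen subbundle complementary to $K$ in $E$ produces the required $D$.

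The main obstacle is precisely the consistency step in the sufficiency argument: identifying the obstruction to $g(D_XY,Z)$ descending in the $Z$-slot as exactly $\Li_Z g$ is the observation that makes the hypothesis both necessary and sufficient. Everything else is a routine algebroid analogue of the classical Levi-Civita derivation, where the derivation property of $\za(X)$ plays the role of vector-field action on functions.
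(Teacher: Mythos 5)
Your proposal is correct and follows essentially the same route as the paper: necessity via the identity $(\Li_Ug)(X,Y)=g(D_XU,Y)+g(X,D_YU)$ forced to vanish by (c), uniqueness via the cyclic-sum derivation of the Koszul formula, and sufficiency by observing that substituting $Z\in\Sec(K)$ into the right-hand side of (\ref{LC}) yields exactly $-(\Li_Zg)(X,Y)$, so the formula descends to $E/K$. Your sketch of verifying (a)--(d) by symmetrizing/antisymmetrizing and tracking the Leibniz corrections just fills in what the paper dismisses as ``a tiresome but standard task.''
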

\begin{proof}
If $D_XY$ is a Koszul derivative. Then, if $U\in\Sec(K)$ and
$X,Y\in\Sec(E)$, then
\beas (\Li_Ug)(X, Y)&=& \za(U) g(X, Y)- g([U, X], Y)- g(X, [U, Y])\\
&=&g(D_UX, Y) + g(X, D_UY)-g(D_UX, Y) + g(D_XU, Y)-g(X, D_UY) + g(X, D_YU)\\
&=& g(D_XU, Y) + g(X, D_YU)\\ &=& \za(X)g(U,Y)- g(U,D_XY)+\za(Y) g(X, U)- g(D_YX, U)= 0\,,
\eeas
since $U\in\Sec(K)$.

Conversely,
\beas && \za(X) (g(Y,Z)) + \za(Y) (g(Z,X)) - \za(Z) (g(Y,X))\\
&=& g(D_X Y + D_Y X, Z) + g(D_X Z - D_Z X, Y) + g(D_Y Z - D_Z Y, X)\,.
\eeas
The right hand side is in turn equal to
$$2g(D_X Y, Z) - g([X,Y], Z) + g([X,Z],Y) + g([Y,Z],X)\,,$$
whence the Koszul formula. With this formula with $U\in\Sec(K)$,
$$(D_XY,U) = -Ug(X,Y)+g(Y,[U,X])+g(X,[U,Y])= -(\Li_Ug)(X, Y)= 0\,,$$
so $g$ must be $K$-invariant. Checking that in this case (\ref{LC}) is a Koszul derivative is a tiresome but standard task.
\end{proof}

Of course, for $F=F^*$ and $g=g^F$ the defining equations (\ref{b1}) and (\ref{LC}) should be equivalent. Indeed,
\beas
&&{\za}(X)g^F(Y,Z)+{\za}(Y)g^F(Z,X)-{\za}(Z)g^F(X,Y)\\
&+&g^F(\left[X,Y\right],Z )-g^F(\left[Y,Z\right],X )-g^F(\left[X,Z\right],Y)\\
&=& (X^L-X^R)Z^RY^LF_{|M}+(Y^L-Y^R)Z^RX^LF_{|M}-(Z^L-Z^R)Y^LX^RF_{|M}\\
&+& Z^R[X,Y]^LF_{|M}+Y^L[Z,X]^RF_{|M}-X^L[Y,Z]^RF_{|M}\\
&=&{\za}(X)g^F(Y,Z)+{\za}(Y)g^F(Z,X)-{\za}(Z)g^F(X,Y)\\
&+& \left(Z^RX^LY^L-Z^RY^LX^L-Y^LZ^RX^R+Y^LX^RZ^R+X^LY^RZ^R-X^LZ^RY^R\right)(F)_{|M}\\
&=& \left(2X^LY^LZ^R-Z^LX^RY^L-Z^RX^LY^R\right)(F)_{|M}\,.
\eeas
But since for Koszul contrast function $F=F^*$ we have (\ref{ko}), then
$$Z^LX^RY^LF_{|M}+Z^RX^LY^RF_{|M}=0$$
and we end up with
$$g^F(W_XY,Z)=2X^LY^LZ^RF_{|M}\,.$$

Now we will define a symmetric three-tensor out of $F$.
\begin{theorem}\label{t3}
The three tensor defined by
\be\label{ts} T^{[F]}(\bar X,\bar Y,\bar Z)=g^{[F]}(\nabla_X^{[F]}\bar Y-\nabla_X^{[F^\ast]}\bar Y,\bar Z) \ee is totally symmetric, $T^{[F]}\in\bigodot^3L^\ast$. We have $$g^{[F]}\left(\nabla_X^{[F]}\bar Y,\bar Z \right)=g^{[F]}\left(\nabla_X^{[g]}\bar Y,\bar Z \right)+\frac{1}{2}T^{[F]}(\bar X,\bar Y,\bar Z)$$ and $$g^{[F]}\left(\nabla_X^{F^\ast}Y,Z \right)=g^{[F]}\left(\nabla_X^{[g]}\bar Y,\bar Z \right)-\frac{1}{2}T^{[F]}(\bar X,\bar Y,\bar Z)\,.$$ In particular, $T^{[F]}=0$ if $F=F^\ast$.
\end{theorem}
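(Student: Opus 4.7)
The plan is to establish total symmetry of $T^{[F]}$ first, then derive the two decomposition formulas by uniqueness of the transversal Levi-Civita connection.

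First I would check that the right-hand side of (\ref{ts}) is $C^\infty(M)$-linear in all three arguments, so that $T^{[F]}$ is indeed a well-defined section of $L^\ast\otimes L^\ast\otimes L^\ast$. Linearity in $\bar Z$ is automatic from nondegeneracy of $g^{[F]}$ on $L^F$, linearity in $X$ follows from the first-argument $C^\infty(M)$-linearity of both connections $\nabla^{[F]}$ and $\nabla^{[F^\ast]}$, and linearity in $\bar Y$ holds because the two anchor-Leibniz terms $\za(X)(f)\bar Y$ arising in $\nabla_X^{[F]}(f\bar Y)$ and $\nabla_X^{[F^\ast]}(f\bar Y)$ cancel when subtracted.

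Next I would verify total symmetry. Symmetry in the first two arguments is immediate from the torsion-free property (\ref{tf}) applied to both connections: subtracting the two identities
$\nabla^{[F]}_X\bar Y-\nabla^{[F]}_Y\bar X=\overline{[X,Y]}$ and $\nabla^{[F^\ast]}_X\bar Y-\nabla^{[F^\ast]}_Y\bar X=\overline{[X,Y]}$
gives $T^{[F]}(\bar X,\bar Y,\bar Z)=T^{[F]}(\bar Y,\bar X,\bar Z)$. Symmetry in the last two arguments follows from duality (\ref{2}): writing (\ref{2}) once and writing the same identity with $\nabla^{[F]}$ and $\nabla^{[F^\ast]}$ swapped (both relations hold since $(\nabla^{[F^\ast]})^\ast=\nabla^{[F]}$), and subtracting, yields
$$g^{[F]}(\nabla^{[F]}_X\bar Y-\nabla^{[F^\ast]}_X\bar Y,\bar Z)=g^{[F]}(\bar Y,\nabla^{[F]}_X\bar Z-\nabla^{[F^\ast]}_X\bar Z),$$
which is exactly $T^{[F]}(\bar X,\bar Y,\bar Z)=T^{[F]}(\bar X,\bar Z,\bar Y)$. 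Combined with symmetry in the first pair this gives full symmetry in $\bigodot^3L^\ast$.

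For the decomposition formulas, let $\nabla^{\tx{av}}:=\tfrac12(\nabla^{[F]}+\nabla^{[F^\ast]})$. This is an $E$-connection on $L^F$, torsion-free because each summand is, and it is metric with respect to $g^{[F]}$: adding the two duality relations yields
$$\za(X)g^{[F]}(\bar Y,\bar Z)=g^{[F]}(\nabla^{\tx{av}}_X\bar Y,\bar Z)+g^{[F]}(\bar Y,\nabla^{\tx{av}}_X\bar Z).$$
By the uniqueness part of Theorem \ref{t2}, the transversal Levi-Civita connection $\nabla^{[g]}$ is the only torsion-free metric $E$-connection on $L^F$, hence $\nabla^{\tx{av}}=\nabla^{[g]}$. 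The two displayed identities of the theorem now follow by pairing $\nabla^{[F]}=\nabla^{[g]}+\tfrac12(\nabla^{[F]}-\nabla^{[F^\ast]})$ and $\nabla^{[F^\ast]}=\nabla^{[g]}-\tfrac12(\nabla^{[F]}-\nabla^{[F^\ast]})$ with $\bar Z$ via $g^{[F]}$. Finally, if $F=F^\ast$ the two Koszul connections coincide tautologically, forcing $T^{[F]}=0$. The one step that requires care is invoking the uniqueness of $\nabla^{[g]}$: Theorem \ref{t2} gives uniqueness of the Koszul derivative on $E$ only up to the choice of a complement of $K^F$, so I would phrase the uniqueness argument directly at the level of the well-defined quotient connection on $L^F$, where no such ambiguity remains.
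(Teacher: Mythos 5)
Your proof is correct, but it takes a genuinely different route from the paper's. The paper establishes total symmetry in one stroke by observing that $F-F^\ast$ has vanishing second jet along $M$ (since $g^F=g^{F^\ast}$ and both functions vanish on $M$ together with their differentials), so that $X^LY^LZ^R(F-F^\ast)|_M$ is automatically a totally symmetric third-order object; it then notes this expression vanishes for $Z\in\ckf$ by the Koszul condition, hence by symmetry for $X$ or $Y$ in $\ckf$ as well, so the tensor descends to $L^F$. You instead derive the symmetry purely structurally: symmetry in the first pair from the common torsion term $\overline{[X,Y]}$ of the two connections, symmetry in the last pair from the two duality relations (the second obtained from (\ref{2}) by swapping $Y$ and $Z$, or equivalently by replacing $F$ with $F^\ast$), and well-definedness on $L^F$ from $C^\infty(M)$-linearity plus the symmetry. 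Your argument is axiomatic --- it works for any pair of dual torsion-free $E$-connections relative to a pseudometric, independently of their origin in a contrast function --- whereas the paper's jet argument is shorter but tied to the specific formula $X^LY^LZ^R(F-F^\ast)|_M$. For the two decomposition identities the paper offers no argument at all, while you supply one by identifying $\tfrac12(\nabla^{[F]}+\nabla^{[F^\ast]})$ with $\nabla^{[g]}$ via uniqueness of the torsion-free metric quotient connection; your closing remark that the uniqueness of Theorem \ref{t2} must be invoked at the level of $L^F$ (where the Koszul formula pins down $\nabla_X\bar Y$ because $g^{[F]}$ is nondegenerate there) is exactly the right precaution, and in fact fills in a step the paper leaves implicit.
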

\begin{proof} The function $F-F^*$ has vanishing the second jets at points of $M$. Hence,
$$g^{[F]}(\nabla_X^{[F]}\bar Y-\nabla_X^{[F^\ast]}\bar Y,\bar Z)
= X^LY^LZ^R(F-F^*)_{|M}$$
are totally symmetric with respect to $X,Y,Z$ and vanishing for $Z\in\ckf$, thus for $Y\in\ckf$ and $X\in\ckf$. Hence, $T^{[F]}$ is a totally symmetric tensor on $L^F=E/K^F$.

\end{proof}

\section{Reduction to metric contrast function}

If $\cG$ is the pair groupoid $M\ti M$, then subalgebroids ${K^F}$ of the Lie algebroid $\mathrm{Lie}(\cG)=\sT M$ correspond to regular involutive distributions on $M$. By the Frobenius theorem, these correspond to regular foliations $\cF$ on $M$. If the foliation is simple, transversal Levi-Civita connection is understood rather as a corresponding standard connection on the quotient space $M/\cF$ (cf. \cite{molino88}). But this situation is rather special from the point of view of a Lie algebroid theory (the anchor map is injective), so in the sequel we propose another
`transversal' structure for a general Lie algebroid.

As a replacement for the Lie algebra of vector fields $\Sec(\sT(M/\cF))$ we propose $\cnf=N({K^F})/\ckf$, where $N({K^F})$ is the normalizer of $\ckf$ in $\Sec(E)$,
$$N({K^F})=\{ X\in\Sec(E)\ |\ [X,{\ckf}]\subset {\ckf}\}\,.$$
$\cnf$ is canonically a module over the algebra of smooth functions constant on the fibers of the image of $K^F$ by the anchor map $\za:E\to\sT M$, $\cF=\za(K^F)$.

It is clear that on $\cnf$ we have a canonical Lie bracket induced from the Lie algebroid bracket on $E$. The whole structure is that of a \emph{Lie pseudoalgebra}, i.e. purely algebraic variant of Lie algebroid \cite{Mac}. In good cases this is actually a Lie algebroid bracket on the vector bundle whose sections are represented by $\cnf$. To view $\cnf$ as a module of sections of a vector bundle over $M/\za({K^F})$, will assume that $\cF$ is a regular simple foliation, so that $M/\za({K^F})$ is well-defined. Additionally, we assume something like a (local) transversal  parallelizability of $K^F$.

\begin{definition} We say that a Lie subalgebroid $K$ of the Lie algebroid $\zt:E\to M$ is \emph{transversally simple} if $\za(K)$ is a regular involutive distribution in $\sT M$ defining a simple foliation $\cF$ (thus the manifold $M_0=M/\cF$ is well-defined) and the normalizer $N(K)\subset\Sec(E)$ is \emph{transitive}, i.e. the $C^\infty(M)$-module of sections of $E/K\to M$ has, around each point of $M$, a local basis represented by elements of $N(K)$.

\end{definition}

In the sequel we assume that $K^F$ is transversally simple, i.e. $\Sec(N({K^F}))$ spans $E$ and that the image $\za({K^F})$ of ${K^F}$ under the anchor map is a regular distribution, thus generates a foliation $\cF$ of $M$. For simplicity, we assume that $M/\cF$ is a manifold, although
one can use the quotient structure defined by a proper atlas and local projections as it is done in the traditional foliated case (cf. \cite{molino88}). Such regular contrast functions we will call \emph{foliated}.

Now, we will show that for the points $x_0=(x^a_0)$ and $x_1=(x^a_1)$ from a single leaf of $\cF$,  vector spaces $E/K^F(x_0)$ and $E/K^F(x_1)$  are canonically identified. The identification is:
$X_0\in\zn(E;K^F)(x_0)$ and $X_1\in\zn(E;K^F)(x_1)$ are identified if and only if there is $X\in N(K^F)$ such that
$X(x_0)=X_0$ and $X(x_1)=X_1$.

To this end we have to show that the above property does not depend on the choice of $X$. Since orbits of $\za(\ckf)$ consist of leaves of $\cF$, we can assume that $x_1=Exp(\za (Y);1)(x_0)$ for some $Y\in\ckf$, where $t\mapsto Exp(\za(Y);t)$ is the flow of the vector field $\za(Y)$. We can also assume that $x_0,x_1$ belong to
one neighbourhood $U\subset M$ on which elements from $N(K^F)$ trivialize the vector bundle $E\to M$. Without loss of generality we can take trivialization $e_1,\dots,e_n$, where $e_1=Y$ and $e_1,\dots,e_r$ span $K_F$ over $U$.

Let $X\in N(K^F)$. Consider the flow of inner automorphisms of the Lie algebroid $E$ induced by the section $Y$. It is known (see \cite{CF03}) that this flow is represented by the flow of the vector field $\dt^\Pi(Y)$ -- the complete lift
of the section $Y\in \Sec(E)$ to a vector field on $E$ (see \cite{GU97,GU99}). Here, $\Pi$ is the linear Poisson tensor on $E^*$ representing the Lie algebroid structure on $E$.
In the coordinates adapted to chosen trivializations, we have
$$ \zP =c^k_{ji}(x)\zx_k \partial _{\zx_i}\otimes \partial _{\zx_j} +
\zr^b_i(x) \partial _{x^b}\we\partial _{\zx_i}\,,\quad  c^k_{ji}(x)=c^k_{ij}(x)
$$
and
$$\dt^\Pi(e_1)=\zr^a_1(x)\pa_{x^a}+c^k_{i1}(x)y^i\pa_{y^k}\,,$$
cf. \cite{GU97,GU99}. Note that, as the basic sections are taken form $N(K^F)$ and $e_i\in\Sec(K^F)$ for $i\le r$, in the above formula for $\dt^\Pi(e_1)$ the index $k$ runs through $1,\dots,r$.
Thus, the flow of $\dt^\Pi(e_1)$ is determined by the integral paths of the ODE
\beas\dot x^a&=&\zr_1^a(x)\\
\dot y^k&=&y^ic^k_{i1}(x)\ \text{for}\ k\le r\\
\dot y^j&=&0\ \text{for}\ j>r
\eeas
It projects onto $Exp(\za (Y);t)$, so the automorphism represented by the value of the flow maps fiber over $x_0$
onto fiber of $x_1$ and, in the coordinates, $e_j(x_1)=e_j(x_0)$ for $j>r$. Since these identities are satisfied for any choice of the basis, they define an automorphism $$E/K^F(x_1)\simeq E/K^F(x_0)$$ and, consequently, the vector bundle $E_0=(E/K^F)/\simeq$. This is a vector bundle over $M_0=M/\cF$ whose sections are naturally identified with elements of the module $\cnf$ over $M_0$.

\begin{theorem}
If $F$ is a foliated Koszul contrast function, 
then $E_0$ is canonically a Lie algebroid over
$M_0=M/\cF$. There is a canonical pseudometric on $E_0$, defined for $X,Y \in N({K^F})$ by
$$g^{<F>}(\bar{X}_0,\bar{Y}_0)(m_0)=X^LY^LF(m)\,,$$
where $m_0\in M_0$ is the coset of $m\in M$ in $M_0=M/\cF^F$, and a pair of dual torsion-free $E_0$-connections on $E_0$ defined by
\bea\label{r1} g^{<F>}(\nabla^F_{\bar{X}_0}\bar{Y}_0,\bar{Z}_0)(m_0)&=&X^LY^LZ^RF(m)\,,\\
 \label{r2}g^{<F^*>}(\nabla^{F^*}_{\bar{X}_0}\bar{Y}_0,\bar{Z}_0)(m_0)&=&X^LY^LZ^RF^*(m)\,.
\eea
Here $\bar{X}_0,\bar{Y}_0,\bar{Z}_0$ are cosets of $X,Y,Z\in N({K^F})$.
\end{theorem}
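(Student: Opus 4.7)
The plan is to proceed in four stages.

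\emph{First}, to endow $E_0$ with a Lie algebroid structure, I would check that $N({K^F})$ is a Lie subalgebra of $\Sec(E)$: for $X_1,X_2\in N({K^F})$ and $U\in\ckf$, the Jacobi identity gives $[[X_1,X_2],U]=[X_1,[X_2,U]]-[X_2,[X_1,U]]\in[N({K^F}),\ckf]\subset\ckf$. Since $\ckf$ is an ideal in $N({K^F})$ by definition of the normalizer, the bracket on $\Sec(E)$ descends to $\cnf=N({K^F})/\ckf$, and transversal simplicity identifies $\cnf$ with $\Sec(E_0)$. The anchor descends because $[\za(X),\za(U)]=\za([X,U])\in\za(\ckf)=\sT\cF$ for $X\in N({K^F})$ and $U\in\ckf$, so $\za(X)$ preserves $\cF$ and projects to a vector field $\bar\za(\bar X_0)$ on $M_0$; the Leibniz rule is inherited.

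\emph{Second}, to show that $g^{<F>}$ is well defined, the key identity is $\Li_U g^F=0$ for every $U\in\ckf$. I would derive it from the duality (\ref{2}) and torsion-freeness (\ref{tf}) of the already constructed pair $\nabla^{[F]},\nabla^{[F^*]}$: applying (\ref{2}) to $\za(U)g^F(Y,Z)$, rewriting $\nabla^F_U Y=\nabla^F_Y U+[U,Y]$ and $\nabla^{F^*}_U Z=\nabla^{F^*}_Z U+[U,Z]$, and using (\ref{2}) once more together with $g^F(U,\cdot)=0$ yields
$$\za(U)g^F(Y,Z)=g^F([U,Y],Z)+g^F(Y,[U,Z]),$$
which is exactly $\Li_U g^F=0$. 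For $X,Y\in N({K^F})$ the right-hand side vanishes, so $g^F(X,Y)$ is $\cF$-leaf-constant; combined with $g^F(\ckf,\cdot)=0$ this makes $g^{<F>}$ well defined on $E_0$.

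\emph{Third}, to descend $\nabla^F$ and $\nabla^{F^*}$ I would use the splitting of Theorem \ref{t3} into the Levi-Civita part $\nabla^{[g]}$ and the totally symmetric tensor $T^{[F]}$. The Koszul formula of Theorem \ref{t2} expresses $\nabla^{[g]}_X\bar Y$ through $\za$-derivatives of $g^F$-scalars and Lie brackets of $X,Y,Z$; for $X,Y,Z\in N({K^F})$ each of these ingredients is $\cF$-leaf-constant by the previous stage and by closure of $N({K^F})$ under $[\cdot,\cdot]$, hence $\nabla^{[g]}_{\bar X_0}\bar Y_0$ descends. For $T^{[F]}$ the analogous invariance $\Li_U T^{[F]}=0$ reduces, thanks to $\overline{[U,X]}=0$ in $L^F$ for $U\in\ckf$ and $X\in N({K^F})$, to $\za(U)T^{[F]}(\bar X,\bar Y,\bar Z)=0$; the same commutator bookkeeping as in the second stage, together with $T^{[F]}(\ckf,\cdot,\cdot)=0$, gives the vanishing. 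Nondegeneracy of $g^{<F>}$ then defines $\nabla^F_{\bar X_0}\bar Y_0$ and $\nabla^{F^*}_{\bar X_0}\bar Y_0$ as sections of $E_0$.

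\emph{Fourth}, duality and torsion-freeness of the new connections on $E_0$ follow by applying (\ref{2}) and (\ref{tf}) to $N({K^F})$-representatives and reading the resulting identities on $M_0$, every term of which is already known from stages two and three to descend. The principal obstacle of the whole argument is the invariance $\Li_U g^F=0$: once available, the same mechanism (duality plus torsion-freeness plus the algebraic vanishing on $\ckf$) propagates to $T^{[F]}$ and, via Koszul, to the connections, forcing the entire dualistic structure to live naturally on $E_0$.
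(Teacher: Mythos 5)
Your route is genuinely different from the paper's. The paper works directly with the defining expression $X^LY^LZ^RF|_M$: it shows by commutator manipulations and the Koszul condition that this vanishes whenever one of $X,Y,Z$ lies in $\ckf$, appeals to the identification of the fibres of $E/K^F$ along the leaves of $\cF$ (constructed before the theorem via the flows of $\dt^\Pi(U)$, $U\in\ckf$, under which sections from $N(K^F)$ are ``constant along leaves'') for independence of the representative $m$ of $m_0$, and then verifies the Leibniz rules and the duality identity by direct computation. Your stages one, two and four are sound; in particular your derivation of $\Li_Ug^F=0$ from duality (\ref{2}) and torsion-freeness (\ref{tf}) is correct (it recovers the first half of the proof of Theorem \ref{t2}) and does yield leaf-constancy of $g^F(X,Y)$ for $X,Y\in N(K^F)$, hence well-definedness of $g^{<F>}$; likewise the descent of the Levi-Civita part via the Koszul formula works, since every term of that formula is built from leaf-constant scalars and brackets of elements of $N(K^F)$.

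The gap is in stage three, at the descent of $T^{[F]}$. The ``commutator bookkeeping'' of stage two succeeds only because duality converts $\za(U)g^F(Y,Z)$ into terms involving $\nabla_U$, which torsion-freeness converts into brackets plus terms annihilated by $g^F(U,\cdot)=0$. No analogous identity is available for $T^{[F]}$: total symmetry and the vanishing of $T^{[F]}$ on $\ckf$ reduce $\Li_UT^{[F]}$ to the single term $\za(U)T^{[F]}(\bar X,\bar Y,\bar Z)$, but they do not make that term vanish. Concretely, writing $H=F-F^*$ (which has vanishing second jet along $M$), commuting $U^L-U^R$ past $X^L,Y^L,Z^L$ kills all bracket terms (they are values of $T^{[F]}$ on $\ckf$) and leaves
$$\za(U)T^{[F]}(\bar X,\bar Y,\bar Z)=\pm\, X^LY^LZ^L\bigl((U^L-U^R)H\bigr)\big|_M\,,$$
the third jet along $M$ of $(U^L-U^R)H$. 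This is a fourth-order quantity in $H$, controlled neither by the Koszul condition (a condition on the third jet of $F$) nor by the duality and torsion identities. This is exactly the point where the paper instead invokes the geometric leaf-wise identification of $E/K^F$ and the constancy of $N(K^F)$-sections to declare the right-hand sides of (\ref{r1})--(\ref{r2}) independent of $m$; without supplying that argument (or an equivalent one), your plan descends only the self-dual part $\nabla^{[g]}$ and not the full dual pair $\nabla^F,\nabla^{F^*}$. A secondary omission: you never check the $C^\infty(M_0)$-Leibniz rules for the descended connections, which the paper verifies explicitly; on your route this would follow from the Koszul formula and tensoriality of $T^{[F]}$, but it should be stated.
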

\begin{proof}
First, we have to show that the right side of (\ref{r1}) equals $0$ if one of $X,Y,Z$ is a section of $K^F$.
This is obvious for $Y$ and $Z$. For $X\in\ckf$ we have
$$X^LY^LZ^RF(m)=Y^LX^LZ^RF(m)+[X,Y]^LZ^R=Y^LZ^RX^LF(m)+Z^R[X,Y]^LF(m)=0\,,$$
since $[X,Y]\in \ckf$ if $X\in\ckf$ and $Y\in N({K^F})$ and $F$ is Koszul.
Similarly we proceed with equation (\ref{r2}). In the whole picture elements of $N(K^F)$ are `constant along the leaves of $\cF$', so the left hand sides of (\ref{r1}) and (\ref{r2}) do not depend on $m$ representing $m_0$.

Further, if $f$ is a smooth function on $M_0$, interpreted as smooth function on $M$ which is constant on the leaves of $\cF$, we have
$X^LY^L(fZ)^RF_{|M}=f\cdot X^LY^LZ^RF_{|M}$, $(fX)^LY^L(fZ)^RF_{|M}=f\cdot X^LY^LZ^RF_{|M}$, and
$$X^L(fY)^LZ^RF_{|M}=f\cdot X^LY^LZ^RF_{|M}+\za(X)(f)\cdot X^LY^LZ^RF_{|M}\,,$$
so $\nabla^F_{\bar{X}_0}\bar{Y}_0$ is a well-defined connection on the Lie algebroid $E_0$. Finally,
\beas &g^{<F>}(\nabla^F_{\bar{X}_0}\bar{Y}_0,\bar{Z}_0)(m_0)+g^{<F^*>}(\bar{Y}_0,\nabla^{F^*}_{\bar{X}_0}\bar{Z}_0)(m_0)\\
&=X^LY^LZ^RF(m)+X^LZ^LY^RF^*(m)=X^LY^LZ^RF(m)-X^RZ^RY^LF(m)\\&=\za(X)g^{<F>}(\bar{Y}_0,\bar{Z}_0)(m_0)\,.
\eeas
\end{proof}
\begin{example}
Consider the pair groupoid $\cG=\R^3\ti\R^3$ with the singular contrast function
$F(x_1,y_1,z_1,x_2,y_2,z_2)=\frac{1}{2}(x_1-x_2)^2+\frac{1}{2}(y_1-y_2)^2$.
We obtain the singular Riemannian metric on the Lie algebroid $\sT\R^3$
\beas g^F(\pa_x,\pa_x)=1\,,\quad g^F(\pa_y,\pa_y)=1\,,\quad g^F(\pa_z,\pa_z)=0
\eeas
and
\beas g^F(\pa_x,\pa_y)=0\,,\quad g^F(\pa_y,\pa_z)=0\,,\quad g^F(\pa_x,\pa_z)=0\,.
\eeas
The sections of the kernel $K^F$  are of the form $f(x,y,z)\pa_z$, so that for the Riemannian metric on the vector bundle $\sT\R^3/K^F\to\R^3$ the cosets $\bar{\pa}_x$ nad $\bar{\pa}_y$ form an orthonormal basis.
The sections of the Lie normalizer $N(K^F)$ are of the form
$$X=f_1(x,y)\pa_x+f_2(x,y)\pa_y+f_2(x,y,z)\pa_z\,,$$
so the Lie algebroid $N(K^F)/K^F$ can be identified with $\sT\R^2$. The induced metric $g^{<F>}$ is the Euclidean metric with the standard Levi-Civita connection.

\end{example}
\begin{example} Let $\cH$ be a Hilbert space and $\cH^\ti=\cH\setminus\{ 0\}$. On $\cH^\ti\ti \cH^\ti$ we consider the following two-point function
$$F(\zf,\zc)=1-\frac{|\bk{\zf}{\zc}|^2}{||\zf||^2\cdot||\zc||^2}\,.$$
To check that $F$ it is a non-negative contrast function we calculate the derivative with respect to $\zf$:
$$x^LF(\zf,\zc)=\frac{d}{dt}_{|t=0}F(\zf+tx,\zc)=\frac{2\re\bk{\zf}{x}|\bk{\zf}{\zc}|^2-2\re(\bk{x}{\zc}\bk{\zc}{\zf})||\zf||^2}
{||\zf||^4\cdot||\zc||^2}\,.$$
On the diagonal, i.e. for $\varphi=\psi$ we have $x^LF(\zf,\zf)=0$. The semi-Riemannian metric given by $F$ reads
\beas g^F(\zf)(x,y) &=& x^Ly^LF(\zf,\zf) \\ &=&\frac{d}{dt}_{|t=0}\frac{2\re\bk{\zf+ty}{x}|\bk{\zf+ty}{\zf}|^2-2\re(\bk{\zf}{x}\bk{\zf+ty}{\zf})||\zf+ty||^2}
{||\zf+ty||^4\cdot||\zf||^2}\\
&=& \frac{2\re\bk{x}{y}||\zf||^2-2\re(\bk{x}{\zf}\bk{y}{\zf})}{||\zf||^4}\,.
\eeas
It is clear that the 2-form $g^F$  is singular since $F(z\varphi, z'\psi)=F(\varphi,\psi)$. The subbundle $K^F$ is then generated by the fundamental vector fields of the $\C^\ti\ti\C^\ti$-action
$$(\zf,\zc)(z,z')=(z\zf, z'\zc),$$
where is of course $\C^\ti=\C\setminus\{0\}$. Reduction to $N(K^F)/K^F$ leads to the Lie algebroid $\sT\mathbb{P}\cH$ over the projective Hilbert space $\mathbb{P}\cH=\cH^\ti/\C^\ti$ with the Riemannian metric
$$\xd g(\zf)=\frac{2||x||^2\cdot||\zf||^2-2\bk{x}{\zf}|^2}{||\zf||^4}\,\xd x^2\,.$$
The above metric is proportional to the well known Fubini-Study metric on $\mathbb{P}\cH$.

\end{example}
\section{Concluding Remarks}
In this paper we have shown how to adapt the Lie groupoid picture dealt
with in a previous paper for the description of information metric tensors
and dualistic connections out of potential functions in the case in which
the distinguishability (contrast) function does not actually "distinguish” between objects
that we may consider to be equivalent.
\par
As a matter of fact, a coordinate free approach to deal with the differential calculus required to derive metric and dual connections out of potential or contrast functions was introduced previously \cite{ciaglia18,ciaglia19,laudato18,manko17}, however there the introduction was by “ad hoc” methods, here it is intrinsic with the notion of Lie groupoid and its associated Lie algebroid.
\par
Elsewhere we shall consider the analogue problem in the more general
setting of $C^*$-algebras, where the analogue of the projections onto the
complex projective space are replaced by $(AA^*)/\tr(AA^*)$  and $(A^*A)/tr(A^*A)$,
stressing the necessity of not distinguishing between $AU$ and $A$ and $UA$ and
$A$ when $U$ is a unitary operator. The groupoid language turns out to be
appropriate to deal with mixed states when the  operators are considered
to be elements of a  Hilbert space with a Hilbert-Schmidt norm.

\section*{Acknowledgments}
J. Grabowski acknowledges research founded by the  Polish National Science Centre grant HARMONIA under the contract number 2016/22/M/ST1/00542. M. Ku\'s acknowledges support of the Polish national Science Center via the OPUS grant 2017/27/B/ST2/02959

\vskip1cm
\noindent Katarzyna Grabowska\\\emph{Faculty of Physics,
University of Warsaw,}\\
{\small ul. Pasteura 5, 02-093 Warszawa, Poland} \\{\tt konieczn@fuw.edu.pl}\\

\noindent Janusz Grabowski\\\emph{Institute of Mathematics, Polish Academy of Sciences}\\{\small ul. \'Sniadeckich 8, 00-656 Warszawa,
Poland}\\{\tt jagrab@impan.pl}
\\

\noindent Marek Ku\'s\\
\emph{Center for Theoretical Physics, Polish Academy of Sciences,} \\
{\small Aleja Lotnik{\'o}w 32/46, 02-668 Warszawa,
Poland} \\{\tt marek.kus@cft.edu.pl}
\\

\noindent Giuseppe Marmo\\
\emph{Dipartimento di Fisica ``Ettore Pancini'', Universit\`{a} ``Federico II'' di Napoli} \\
\emph{and Istituto Nazionale di Fisica Nucleare, Sezione di Napoli,} \\
{\small Complesso Universitario di Monte Sant Angelo,} \\
{\small Via Cintia, I-80126 Napoli, Italy} \\
{\tt marmo@na.infn.it}
\\

\end{document}